\documentclass[reqno]{amsart}
\usepackage{txfonts}
\usepackage{amsfonts}
\usepackage{amssymb}
\usepackage[mathscr]{eucal}
\usepackage{amsmath}
\usepackage{array,longtable}
\usepackage[bookmarksopen,colorlinks,citecolor=blue,
linkcolor=black,pdfstartview=FitH]{hyperref}
\usepackage{float}
\usepackage{url}
\usepackage{appendix}
\usepackage{graphicx}
\usepackage{enumitem}
\usepackage{algorithm}
\usepackage{algpseudocode}

\newtheorem{theorem}{Theorem}[section]
\newtheorem{corollary}[theorem]{Corollary}
\newtheorem{lemma}[theorem]{Lemma}
\newtheorem{proposition}[theorem]{Proposition}
\theoremstyle{definition}
\newtheorem{definition}[theorem]{Definition}
\newtheorem{notation}[theorem]{Notation}

\numberwithin{equation}{section}
\theoremstyle{remark}
\newtheorem{remark}[theorem]{Remark}

\newcounter{condition}

\allowdisplaybreaks[4]
\makeatletter
\@namedef{subjclassname@2020}{\textup{2020} MSC}
\makeatother

\begin{document}

\title{Substitution and quotient of the isotropy group action}
\pdfbookmark[0]{ }{}

\author[li]{Xin Li}
\address{School of Mathematical Sciences, Zhejiang University of Technology, Hangzhou 310023, Zhejiang, P. R. China}
\email{xinli1019@126.com} 

\author[Wang]{Yu Wang}
\address{College of Science, National University of Defense Technology, Changsha 410072, Hunan, China.}
\email{wangyu25@nudt.edu.cn}

\author[Hu]{Shenglong Hu}
\address{College of Science, National University of Defense Technology, Changsha 410072, Hunan, China.}
\email{hushenglong@nudt.edu.cn}

\thanks{Xin Li's research is supported by National Natural Science Foundation of China (Grant No. 11801506). The research of Yu Wang and Shenglong Hu is partially supported by the Natural Science Foundation of Hunan Province of China (Grant No. 2025JJ20006), the National Science Foundation of China (Grant No. 12571334), and the Innovation Research Foundation of National University of Defense Technology.}

\subjclass[2020]{Primary 14L30; Secondary 68W30}
\keywords{Substitution, Brent equations, Isotropy group action, Tangent basis matrix, Parameterized solution set}

\begin{abstract}
Given a solution of the Brent equations, a partial solution can be substituted into the Brent equations which makes it a reduced polynomial system. Then the reduced polynomial system can be solved by symbolic software, and a parameterized solution set can often be found. However, the solution set admits a positive-dimensional isotropy group action. The parameterized solution set may lie entirely in a single isotropy group orbit. In this paper, we give a geometric explanation of making the substitution. Considering the foliation structure of the solution set, a method on choosing the partial solution is proposed. By the proposed method, parameterized solution sets that intersect distinct isotropy group orbits can be efficiently determined. In particular, the rational solution found by Dumas, Pernet and Sedoglavic is parameterized. In the parameterized solution set, we can find infinitely many inequivalent 48-multiplication algorithms with only rational coefficients.
\end{abstract}
\maketitle

\section{Introduction}\label{se:intro}
\

In the seminal paper \cite{Stra-69}, Strassen discovered an algorithm that computes the multiplication of two $2\times 2$ matrices by 7 multiplications instead of 8. This method has been extended and developed into new classes of algorithms for matrix multiplication, known as Strassen-type algorithms. It is well-known that Strassen-type algorithms can be obtained by solving polynomial systems. More precisely, if we want to find an algorithm which computes the multiplication of an $m\times n$ matrix and an $n\times p$ matrix using $r$ multiplications, we can solve a polynomial system denoted by $B(m,n,p|r)$, which is called \emph{the Brent equations} \cite{Brent70,Heule21}.

Let $\mathrm{V}(m,n,p|r)$ denote the solution set of $B(m,n,p|r)$. From de Groote’s discussion \cite{deGr78-1,deGr78-2}, we know that $\mathrm{V}(m,n,p|r)$ is positive-dimensional and admits a positive-dimensional isotropy group action. Let $\mathcal{I}(m,n,p|r)$ denote the corresponding isotropy group. It is well-known that $\mathcal{I}(m,n,p|r)$ is generated by four types of groups \cite{Bur15,Heule21,kaumoo22}:
\begin{enumerate}
  \item the de Groote group;
  \item the layer scaling group;
  \item the group of permutation summands;
  \item the group of $S_3$-symmetry.
\end{enumerate}
The  de Groote and layer scaling groups are positive-dimensional Lie groups. The group of permutation summands and  $S_3$-symmetry are finite discrete groups.

In \cite{deGr78-2}, de Groote showed that $\mathrm{V}(2,2,2|7)$ consists of a single isotropy group orbit.  Then the dimension of $\mathrm{V}(2,2,2|7)$ is equal to the dimension of the  $\mathcal{I}(2,2,2|7)$-orbit. However, if the dimension of $\mathrm{V}(m,n,p|r)$ is greater than the dimension of $\mathcal{I}(m,n,p|r)$-orbits, there are infinitely many different $\mathcal{I}(m,n,p|r)$-orbits in $\mathrm{V}(m,n,p|r)$. 

Since the polynomial system $B(m,n,p|r)$ is large-scale, 
computing the full solution set is currently infeasible \cite{Heule21}. Nevertheless, if we find one new solution, then we can try to search other new solutions in its neighbourhood by fixing a partial solution and making the substitution. In \cite{JohnMc86}, by fixing partial solutions, Johnson and McLoughlin found two parameterized solution sets in $\mathrm{V}(3,3,3|23)$, and showed that one of them contains infinitely many points that belong to different $\mathcal{I}(3,3,3|23)$-orbits.  After that, in \cite{Heule21} Heule et al. found a subset of $\mathrm{V}(3,3,3|23)$ with 17 parameters. Other parameterized solution sets can be found in \cite{CV25}.

However, there are some obstacles when fixing the partial solutions. First, after fixing the partial solution, the known solution may become an isolated solution in the reduced polynomial system. Second, even if one parameterized solution set is obtained, it may be contained in a single orbit, and in this case, the parameterized solution set obtained is not new. In this paper, we propose a method that can overcome these obstacles. In many cases, the new parameterized solution set can be computed efficiently.

To that end, tools from Lie group theory \cite{olver93li,olver95e,olver99c,olver03mf} are employed. Since $\mathcal{I}(m,n,p|r)$ is a positive-dimensional Lie group, a simple but key observation is that the group action of $\mathcal{I}(m,n,p|r)$ on $\mathrm{V}(m,n,p|r)$ makes its smooth part a foliation whose leaves are the group orbits \cite{lawson74fol,lee12smooth}. Fixing a partial solution and making the substitution can be viewed as the intersection between an affine coordinate subspace and $\mathrm{V}(m,n,p|r)$. Then if a suitable affine coordinate subspace is chosen, after intersection the parameterized solution set contains points in distinct group orbits. To find such an affine coordinate subspace, we propose a rank condition. For the Brent equations, we give a practical substitution procedure based on the rank condition. Numerical tests for solutions in $\mathrm{V}(3,3,3|23)$, $\mathrm{V}(4,4,4|48)$ and $\mathrm{V}(4,4,4|49)$ are carried out, and many known solutions can be efficiently parameterized. In particular, the rational solution in $\mathrm{V}(4,4,4|48)$ found by Dumas, Pernet and Sedoglavic is parameterized \cite{dumas25}. A one-dimensional rational parameterized curve is obtained, which contains infinitely many rational points that belong to distinct $\mathcal{I}(4,4,4|48)$-orbits. The result is given in Appendix \ref{sec-app}.

The paper is organized as follows. In Section \ref{sec-subnull}, the preliminaries and basic results for making the substitution are introduced. In Section \ref{sec-subslg}, we discuss how to fix partial solutions when the solution set of the polynomial system has an isotropy group action. Based on the tangent and nullspace basis matrices, a rank condition is proposed in Section \ref{subsec-ntmethod}, which can help us find available partial solutions to be fixed. In Section \ref{sec-tnsbe}, we give explicit formulas for calculating the tangent and nullspace basis matrices for solutions of Brent equations. In Section \ref{sec-numtest}, we propose a practical substitution procedure for the Brent equations and present the corresponding computational experiment results. In Section \ref{sec-remark}, some remarks and open problems are given.

The computations are performed in Julia version~1.12.2
\cite{Bez17Julia} on a laptop equipped with an Intel Core Ultra
7 155H processor and 32.0~GB of RAM. The source code is available in
\cite{L-23}.

\section{Making the substitution by the nullspace}\label{sec-subnull}

In this section, we give some elementary results on making the substitution for polynomial systems. First, some preliminaries are given.

\subsection{Preliminaries and notation}
\

Throughout this paper, we focus on the real polynomial systems and their real solution set. In this section, we introduce some notation and properties that will be used in the following sections.

We let $[n]$ denote the set $\{1,2,\ldots,n\}$. 
If $I=\{i_1,i_2,\ldots,i_k\}$ is a subset of $[n]$,  we let $|I|$ denote the number of elements of $I$, so $|I|=k$. The set of all $n\times m$ real matrices is denoted by $\mathbb{R}^{n\times m}$. If $A\in \mathbb{R}^{n\times m}$ is a matrix, let $A_I\in \mathbb{R}^{k\times m}$ denote the submatrix of $A$ obtained by choosing the rows indexed by $I$. The transpose of $A$ is denoted by $A^t$.
For a vector $v=(v_1,v_2,\ldots,v_n)^t\in \mathbb{R}^n$, we let $v_I=(v_{i_1},v_{i_2},\ldots,v_{i_k})^t$ denote the restriction of $v$ to $I$.  For a set of vectors $\{v_i|i=1,2,\ldots, k\}$ in $\mathbb{R}^n$, we let $\langle v_1, v_2,\ldots, v_k \rangle \subseteq \mathbb{R}^n$ denote the linear subspace spanned by them. Now, we give some lemmas.
\begin{lemma}\label{lem-rankaai}
Let $A=(a_1,a_2,\ldots,a_m)\in \mathbb{R}^{n\times m}$. Let $V_A=\langle a_1,a_2,\ldots a_m \rangle$ be the linear subspace of $\mathbb{R}^{n}$ spanned by $a_i$. Let $V_A^I=\{v\in V_A| v_I=0 \}$.
Then $V_A^I \subseteq V_A$ is a linear subspace with dimension
$$\dim V_A^I=\operatorname{rank}A-\operatorname{rank}A_I.$$

In particular, if $\operatorname{rank}A=\operatorname{rank}A_I$, then $\dim V_A^I=0$.
\end{lemma}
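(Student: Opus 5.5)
The plan is to identify $V_A^I$ with the kernel of a linear map restricted to $V_A$ and then apply rank--nullity. Let $\pi_I:\mathbb{R}^n\to\mathbb{R}^{|I|}$ be the coordinate projection $v\mapsto v_I$; it is linear. By definition $V_A^I=\ker(\pi_I|_{V_A})$, so $V_A^I$ is a linear subspace of $V_A$. This settles the first claim immediately.

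For the dimension, apply rank--nullity to the restricted map $\pi_I|_{V_A}:V_A\to\mathbb{R}^{|I|}$. This gives
\[
\dim V_A=\dim V_A^I+\dim \pi_I(V_A).
\]
Since $V_A$ is the column space of $A$, we have $\dim V_A=\operatorname{rank}A$. It remains to show $\dim\pi_I(V_A)=\operatorname{rank}A_I$.

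For this, I will use that $\pi_I(Ax)=(Ax)_I=A_Ix$ for every $x\in\mathbb{R}^m$. Hence $\pi_I(V_A)=\{A_Ix\mid x\in\mathbb{R}^m\}$, which is exactly the column space of $A_I$. Its dimension is therefore $\operatorname{rank}A_I$, and substituting into the rank--nullity identity yields $\dim V_A^I=\operatorname{rank}A-\operatorname{rank}A_I$. The special case follows at once: if $\operatorname{rank}A=\operatorname{rank}A_I$, then $\dim V_A^I=0$.

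There is no real obstacle here. The only step needing care is the identification $\pi_I(V_A)=\operatorname{col}(A_I)$, which comes down to the observation that selecting rows commutes with taking linear combinations of columns.
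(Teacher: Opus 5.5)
Your proposal is correct and follows essentially the same route as the paper: restrict the coordinate projection $\pi_I$ to $V_A$, identify its kernel with $V_A^I$ and its image with the column space of $A_I$ via $(Ac)_I=A_Ic$, and conclude by rank--nullity using $\dim V_A=\operatorname{rank}A$.
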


\begin{proof}
Consider the coordinate restriction map
$$\pi_I:V_A\longrightarrow \mathbb{R}^{|I|},\qquad \pi_I(v)=v_I,$$
where $v_I$ denotes the vector consisting of the coordinates of $v$
indexed by $I$. Then $\pi_I$ is linear. Moreover,
$$\ker \pi_I=V_A^I.$$
Hence $V_A^I$ is a linear subspace of $V_A$.

Since every $v\in V_A$ can be written as $v=A c$
for some $c\in\mathbb{R}^m$, we have
$$v_I=A_I c.$$
Therefore,
$$\operatorname{Im}\pi_I=\{A_I c \mid c\in\mathbb{R}^m\},$$
which is exactly the column space of $A_I$. Hence
$$\dim \operatorname{Im}\pi_I=\operatorname{rank} A_I.$$
By the rank-nullity theorem (see, e.g., \cite[Thm 4.1.6]{artin11}),
$$\dim V_A=\dim \ker\pi_I+\dim\operatorname{Im}\pi_I.$$
Since $\dim V_A=\operatorname{rank} A$, we obtain
$$\dim V_A^I=\dim\ker\pi_I=\operatorname{rank} A-\operatorname{rank} A_I.$$
\end{proof}

\begin{lemma}\label{lem-subfull}
Suppose that $A=(a_1,a_2,\ldots,a_m)\in \mathbb{R}^{n\times m}$  and  $B=(b_1,b_2,\ldots,b_k)\in \mathbb{R}^{n\times k}$ where $a_i,~b_i\in \mathbb{R}^{n}$. Let $A_I$ and $B_I$ be the row submatrices of $A$ and $B$
indexed by $I$. Let $\langle a_1,a_2,\ldots,a_m \rangle$ and $\langle b_1,b_2,\ldots,b_k \rangle$ be linear subspaces of $\mathbb{R}^{n}$ spanned by $a_i$ and $b_i$. Let $V_A=\langle a_1,a_2,\ldots,a_m \rangle$ and 
 $V_B=\langle b_1,b_2,\ldots,b_k \rangle$. Suppose that $V_B\subseteq V_A$. Given a subset $I\subseteq [n]$, we have
$$\operatorname{rank}A_I\geq \operatorname{rank}B_I$$
and
$$\operatorname{rank}A-\operatorname{rank}A_I\geq \operatorname{rank}B-\operatorname{rank}B_I.$$
\end{lemma}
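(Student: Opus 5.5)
Both inequalities will come from viewing the coordinate restriction $\pi_I(v)=v_I$, already used in the proof of Lemma \ref{lem-rankaai}, on the nested subspaces $V_B\subseteq V_A$. As in that proof, $\operatorname{rank}A=\dim V_A$, $\operatorname{rank}B=\dim V_B$, and $\operatorname{rank}A_I=\dim \pi_I(V_A)$, $\operatorname{rank}B_I=\dim\pi_I(V_B)$, because every $v\in V_A$ has the form $v=Ac$ with $v_I=A_Ic$, and similarly for $B$.

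For the first inequality, $V_B\subseteq V_A$ gives $\pi_I(V_B)\subseteq \pi_I(V_A)$. Both images are subspaces of $\mathbb{R}^{|I|}$, so $\operatorname{rank}B_I=\dim\pi_I(V_B)\le \dim\pi_I(V_A)=\operatorname{rank}A_I$. A matrix-level version of the same argument is also available: each $b_j$ equals $Ac_j$ for some $c_j$, so $B=AC$ and $B_I=A_IC$, hence $\operatorname{rank}B_I\le\operatorname{rank}A_I$.

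For the second inequality, apply Lemma \ref{lem-rankaai} to $A$ and to $B$. This gives $\operatorname{rank}A-\operatorname{rank}A_I=\dim V_A^I$ and $\operatorname{rank}B-\operatorname{rank}B_I=\dim V_B^I$, where $V_B^I=\{v\in V_B\mid v_I=0\}$. Since $V_B\subseteq V_A$, we have $V_B^I=V_B\cap\ker\pi_I\subseteq V_A\cap\ker\pi_I=V_A^I$. Dimension is monotone under inclusion of subspaces, so $\dim V_B^I\le\dim V_A^I$, which is the claim.

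Neither step is a real obstacle. The only point needing care is the identification $\operatorname{rank}B_I=\dim\pi_I(V_B)$: the restriction map applied to the spanning columns $b_1,\dots,b_k$ gives exactly the columns of $B_I$. This holds by the same computation as in Lemma \ref{lem-rankaai}.
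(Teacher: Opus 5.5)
Your proposal is correct and follows essentially the same route as the paper: the first inequality comes from the inclusion of the restricted column spans $\pi_I(V_B)\subseteq\pi_I(V_A)$, and the second from applying Lemma \ref{lem-rankaai} to both matrices together with the inclusion $V_B^I\subseteq V_A^I$. Your aside that $B=AC$ gives $B_I=A_IC$ is a valid matrix-level alternative for the first inequality, but it is not needed.
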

\begin{proof}
Let $a_{i,I}$ $(1\leq i\leq m)$ denote the restriction of $a_i$ to $I$ and similarly, for $b_{j,I}$ $(1\leq j\leq k)$. Then we have $A_I=(a_{1,I}, a_{2,I},\ldots,a_{m,I})$ and $B_I=(b_{1,I}, b_{2,I},\ldots,b_{k,I})$.
Since $\langle b_1,b_2,\ldots,b_k \rangle\subseteq \langle a_1,a_2,\ldots,a_m \rangle$, we have $\langle b_{1,I},b_{2,I},\ldots,
b_{k,I} \rangle\subseteq \langle a_{1,I},a_{2,I},\ldots,a_{m,I} \rangle$.
So we have
$$\operatorname{rank}B_I=\dim \langle b_{1,I},b_{2,I},\ldots,
b_{k,I} \rangle 
\leq \dim \langle a_{1,I},a_{2,I},\ldots,a_{m,I} \rangle=\operatorname{rank}A_I.$$

Let $V_A^I=\{v\in V_A| v_I=0 \}$ and $V_B^I=\{v\in V_B| v_I=0 \}$. Then we also have that $V_B^I\subseteq V_A^I$. By Lemma \ref{lem-rankaai}, we have $$ \operatorname{rank}A-\operatorname{rank}A_I=\dim V_A^I \geq \dim V_B^I=\operatorname{rank}B-\operatorname{rank}B_I.$$
\end{proof}
Then by Lemma \ref{lem-subfull} we have the following corollary.
\begin{corollary}\label{cor-subfull}
In Lemma \ref{lem-subfull},
if $\langle a_1,a_2,\ldots,a_m \rangle=\langle b_1,b_2,\ldots,b_k \rangle$ then for any subset $I\subseteq [n]$ we have
$$\operatorname{rank}A_I=\operatorname{rank}B_I$$
and  
$$\operatorname{rank}A-\operatorname{rank}A_I= \operatorname{rank}B-\operatorname{rank}B_I.$$
\end{corollary}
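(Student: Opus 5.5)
The plan is to apply Lemma \ref{lem-subfull} twice, once in each direction, and combine the resulting inequalities.

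Since $\langle a_1,\ldots,a_m\rangle=\langle b_1,\ldots,b_k\rangle$, we have $V_B\subseteq V_A$. Applying Lemma \ref{lem-subfull} to the pair $(A,B)$ with the given subset $I$ gives
$$\operatorname{rank}A_I\geq\operatorname{rank}B_I \quad\text{and}\quad \operatorname{rank}A-\operatorname{rank}A_I\geq\operatorname{rank}B-\operatorname{rank}B_I.$$

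Equality of the spans also gives $V_A\subseteq V_B$. The hypotheses of Lemma \ref{lem-subfull} are symmetric in the two matrices apart from the inclusion. So we may apply it with the roles of $A$ and $B$ interchanged, that is, with $B$ as the larger-span matrix and $A$ as the smaller one. This yields the reverse inequalities
$$\operatorname{rank}B_I\geq\operatorname{rank}A_I \quad\text{and}\quad \operatorname{rank}B-\operatorname{rank}B_I\geq\operatorname{rank}A-\operatorname{rank}A_I.$$

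Combining the two pairs of inequalities gives both claimed equalities. As a consistency check, $\operatorname{rank}A=\dim V_A=\dim V_B=\operatorname{rank}B$, which also follows directly from the two equalities.

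There is no real obstacle. The only point to confirm is that Lemma \ref{lem-subfull} places no constraint on the number of columns, since $m$ and $k$ may differ. It does not, so the role swap is legitimate.
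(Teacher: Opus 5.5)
Your proof is correct and matches what the paper intends: it gives no separate proof, only noting that the corollary follows from Lemma \ref{lem-subfull}. Applying that lemma in both directions, using $V_B\subseteq V_A$ and $V_A\subseteq V_B$, and combining the opposite inequalities is exactly the implied argument.
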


\subsection{Substitution by fixing a partial solution}
\

Let $F: \mathbb{R}^n\to \mathbb{R}^m$ be a polynomial mapping defined by 
$$F(x)=
\left(
\begin{array}{c}
f_1(x)\\
f_2(x)\\
\vdots\\
f_m(x)
\end{array}
\right),
$$
where $x=(x_1,x_2,\ldots,x_n)$. The corresponding system of polynomial equations $F(x)=0$ is given by
\begin{equation}\label{eq-FPE}
f_i(x_1,x_2,\ldots,x_n)=0\quad\text{where}~i=1,2,\ldots,m.
\end{equation}
The Jacobian matrix of $F$ is given by 
\begin{equation}\label{eq-jac}
J(x) =\left(\frac{\partial f_i}{\partial x_j}\right)_{i,j},~\text{where}~1\leq i\leq m,~1\leq j\leq n.
\end{equation}
Let $V(F)=\{x\in \mathbb{R}^n|F(x)=0\}$ be the solution set of (\ref{eq-FPE}). Given $s\in V(F)$, we let $\dim_s V(F)$ denote the local dimension of $s$ \cite{cox25,li25def}.  
\begin{definition}\cite{cox25}
A solution $s\in V(F)$ is called \emph{smooth} (or nonsingular) if
$\dim_s V(F)=d$, where $d=n-\operatorname{rank}J(s)$.
\end{definition}
So if $s$ is a smooth solution whose local dimension is $d$, then 
$\operatorname{rank}J(s)=n-d$. The nullspace of $J(s)$ is defined by
$$\mathrm{Null}(s)=\{x\in \mathbb{R}^n| J(s)x=0\},$$
which is a $d$-dimensional subspace of $\mathbb{R}^n$.
Choosing a (typical) basis $\{v_1,v_2,\ldots,v_d\} \subseteq~\mathrm{Null}(s)$, we define the nullspace basis matrix of $s$ by
\begin{equation}\label{eq-ns}
N(s)=(v_1,v_2,\ldots,v_d)\in \mathbb{R}^{n\times d}.
\end{equation}
Then we have $\operatorname{rank}N(s)=d$. Let $I\subseteq [n]$ be a subset and $|I|=k$. For a solution $s=(s_1,s_2,\ldots,s_n)$, we let 
$$s_I=(s_{i_1},s_{i_2},\ldots,s_{i_k}),$$
where $i_j\in I$ and $i_1<i_2<\cdots<i_k$. Then $s_I$ is called \emph{a partial solution} of $F(x)=0$ (see, e.g., \cite{cox25}). By fixing the partial solution and adjoining the constraints $x_i-s_i=0$ for $i\in I$ to $F(x)=0$, we obtain the constrained polynomial system $F_I(x)=0$:
\begin{equation}\label{eq-FI}
\left\{
\begin{array}{r@{\;}c@{\;}r@{\;}c@{\;}l}
\multicolumn{3}{r}{F(x)} & = & 0,\\
x_{i_1} & - & s_{i_1} & = & 0,\\
x_{i_2} & - & s_{i_2} & = & 0,\\
        & \vdots &     &   &   \\
x_{i_k} & - & s_{i_k} & = & 0.
\end{array}
\right.
\end{equation}
Then the corresponding polynomial mapping is:
$$F_I(x)=
\left(
\begin{array}{c}
F(x)\\
x_{i_1}-s_{i_1}\\
\vdots\\
x_{i_k}-s_{i_k}
\end{array}
\right),
$$
where $x=(x_1,x_2,\ldots,x_n)$.
Clearly, $s$ is also a solution of $F_I(x)=0$.

\begin{remark}
To solve the constrained system $F_I(x)=0$ in \eqref{eq-FI}, we
may substitute $x_i=s_i$ for all $i\in I$ into $F(x)$. This yields
a reduced polynomial system in the variables $x_j$ where $j\in[n]\setminus I$. Without of confusion, we also denote this reduced system by
$F_I(x)=0$. We refer to this procedure as \emph{making a
substitution}, or simply \emph{substitution}.
\end{remark}

Let 
\begin{equation}\label{eq-VFI}
V(F_I)=\{x\in \mathbb{R}^n|F_I(x)=0\}
\end{equation}
denote the solution set of $F_I(x)=0$. Then we have $V(F_I)\subseteq V(F)$. 

\begin{remark}
Let 
$$L_I=\{x\in \mathbb{R}^n|x_i=s_i~\text{for}~i\in I\}$$
be an affine coordinate subspace. 
So making a substitution can be understood as intersecting $V(F)$ with $L_I$. Thus, we have $V(F_I)=V(F)\cap L_I$.
\end{remark}

\begin{definition}
Let  $s\in V(F)$ be a smooth solution. A partial solution $s_I$ is called 
\emph{regular} if $s$ is a smooth solution of $F_I(x)=0$. The corresponding subset $I\subseteq [n]$ is called \emph{a regular coordinate set}.
\end{definition}
\begin{remark}
It is not hard to find a partial solution that is not regular. For example,
consider $f(x,y,z)=xy-z=0$ in $\mathbb{R}^3$, $s=(0,0,0)$ is a smooth solution. However, since (0, 0) is not a smooth point on $xy=0$, the partial solution $s_3=0$ is not regular.
\end{remark}


Let $e_i = (0,\ldots,0,1,0,\ldots,0) \in \mathbb{R}^{1\times n}$ be the $i$-th standard basis row vector, whose $i$-th entry is $1$ and all other entries are zero. For $I=\{i_1,i_2,\ldots,i_k\}$, let
$$E_I=\left(
\begin{array}{c}
e_{i_1}\\
e_{i_2}\\
\vdots\\
e_{i_k}
\end{array}
\right).
$$
Let $J_I(x)$ be the Jacobian matrix of $F_I(x)$. It is not hard to see that 
\begin{equation}\label{eq-JI}
 J_I(x)=\left(
\begin{array}{c}
J(x)\\
E_I
\end{array}
\right).
\end{equation}

Let $\mathrm{Null}_I(x)$ denote the nullspace of $J_I(x)$. Let 
\begin{equation}\label{eq-nis}
N_I(s) 
\end{equation}
be the submatrix of $N(s)$ obtained by choosing rows indexed by $I$, and denote $d_{N_I}=\operatorname{rank}N_I(s)$.

\begin{proposition}\label{prop-nbm}
Let $s_I$ be a partial solution.
The nullspace of $J_I(s)$ is denoted by $\mathrm{Null}_I(s)$.
Then we have
$$\mathrm{Null}_I(s)=\{v|v\in \mathrm{Null}(s)~\text{and}~v_I=0\}$$
and
$$\dim \mathrm{Null}_I(s)=\operatorname{rank}N(s)-\operatorname{rank}N_I(s)=d-d_{N_I}.$$
\end{proposition}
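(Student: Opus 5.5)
The plan is to describe $\mathrm{Null}_I(s)$ directly from the block structure of $J_I(s)$ in \eqref{eq-JI}, and then obtain the dimension formula from Lemma~\ref{lem-rankaai}.

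\textbf{Set equality.} Since
$$J_I(s)=\left(\begin{array}{c} J(s)\\ E_I\end{array}\right),$$
a vector $v\in\mathbb{R}^n$ satisfies $J_I(s)v=0$ exactly when both $J(s)v=0$ and $E_Iv=0$ hold. The first condition says $v\in\mathrm{Null}(s)$. For the second, the rows of $E_I$ are $e_{i_1},\ldots,e_{i_k}$, so $E_Iv=(v_{i_1},\ldots,v_{i_k})^t=v_I$. Hence $E_Iv=0$ is the same as $v_I=0$, and this proves the first identity.

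\textbf{Dimension.} Let $A=N(s)=(v_1,\ldots,v_d)$. Its columns form a basis of $\mathrm{Null}(s)$, so $V_A=\mathrm{Null}(s)$. In the notation of Lemma~\ref{lem-rankaai}, the set just described is exactly $V_A^I$. Lemma~\ref{lem-rankaai} then gives
$$\dim\mathrm{Null}_I(s)=\operatorname{rank}N(s)-\operatorname{rank}N_I(s),$$
where $N_I(s)=A_I$ is the row submatrix selected by $I$.

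Finally, $\operatorname{rank}N(s)=d$ because the columns of $N(s)$ are a basis of the $d$-dimensional space $\mathrm{Null}(s)$; here smoothness of $s$ gives $\operatorname{rank}J(s)=n-d$. By definition $\operatorname{rank}N_I(s)=d_{N_I}$, so the dimension equals $d-d_{N_I}$.

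\textbf{Obstacles.} There is no real difficulty. The only points needing care are identifying $E_Iv$ with $v_I$, and noting that the result does not depend on which basis is used for $N(s)$, since all bases span the same subspace. This independence also follows from Corollary~\ref{cor-subfull}.
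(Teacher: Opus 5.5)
Your proposal is correct and follows the paper's proof: you read off the set identity from the block form of $J_I(s)$ in \eqref{eq-JI}, using $E_I v = v_I$, and then apply Lemma~\ref{lem-rankaai} with $A=N(s)$ to obtain $\dim \mathrm{Null}_I(s)=\operatorname{rank}N(s)-\operatorname{rank}N_I(s)=d-d_{N_I}$. Your extra remarks, that $\operatorname{rank}N(s)=d$ and that the result does not depend on the chosen basis, are accurate and only slightly more explicit than the paper.
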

\begin{proof}
By definition,   we have
$$\mathrm{Null}_I(s)=\{v\in \mathbb{R}^n| J_I(s)v=0\}.$$
From (\ref{eq-JI}), we can see that
if $J_I(s)v=0$, then $J(s)v=0$. So the nullspace of $J_I(s)$ is a linear subspace of the nullspace of $J(s)$.
On the other hand, if $J_I(s)v=0$, then $E_I v=0$ which implies $v_I=0$.
Conversely, if $v\in \mathrm{Null}(s)$ and $v_I=0$, then $v\in \mathrm{Null}_I(s)$.
So we have 
$$\mathrm{Null}_I(s)=\{v|v\in \mathrm{Null}(s)~\text{and}~v_I=0\}.$$

Then by Lemma \ref{lem-rankaai}, the dimension of $\mathrm{Null}_I(s)$ is
$$\dim \mathrm{Null}_I(s)=\operatorname{rank}N(s)-\operatorname{rank}N_I(s)=d-d_{N_I}.$$
\end{proof}
By Proposition \ref{prop-nbm}, we have the following corollary.
\begin{corollary}\label{cor-ddi}
Suppose that $I$ is a regular coordinate set.
If $d-d_{N_I}=0$, then $s$ is an isolated solution of $F_I(x)=0$.
If $d-d_{N_I}>0$, then $s$ is a positive-dimensional solution of $F_I(x)=0$ with local dimension $d-d_{N_I}$.
\end{corollary}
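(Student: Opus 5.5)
The argument combines the definition of a regular coordinate set with Proposition \ref{prop-nbm}. Since $I$ is regular, $s$ is by definition a smooth solution of $F_I(x)=0$. Applying the definition of smoothness to the polynomial map $F_I:\mathbb{R}^n\to\mathbb{R}^{m+k}$, whose Jacobian is $J_I$ by \eqref{eq-JI}, gives
$$\dim_s V(F_I)=n-\operatorname{rank}J_I(s)=\dim \mathrm{Null}_I(s),$$
where the second equality is rank--nullity for the $(m+k)\times n$ matrix $J_I(s)$.

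Next I would invoke Proposition \ref{prop-nbm}, which identifies $\dim\mathrm{Null}_I(s)=d-d_{N_I}$. Combining the two displays gives $\dim_s V(F_I)=d-d_{N_I}$.

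The two cases now follow. If $d-d_{N_I}>0$, the local dimension of $V(F_I)$ at $s$ is the positive number $d-d_{N_I}$, which is exactly the second claim. If $d-d_{N_I}=0$, the local dimension at $s$ is zero. A real algebraic set has local dimension zero at a point exactly when that point is isolated: some neighbourhood meets $V(F_I)$ only in $s$, since any other accumulating points would give a positive-dimensional local component. I would cite the standard local-dimension notion from \cite{cox25,li25def} for this. Hence $s$ is an isolated solution of $F_I(x)=0$.

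There is no real obstacle here. The only point needing care is that the regularity hypothesis is exactly what allows us to equate the local dimension with the nullity of $J_I(s)$. Without it, as in the remark with $xy-z$, the nullity would only be an upper bound. I would add one sentence stating this explicitly.
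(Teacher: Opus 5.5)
Your proposal is correct and follows the paper's argument: regularity of $I$ gives $\dim_s V(F_I)=n-\operatorname{rank}J_I(s)=\dim\mathrm{Null}_I(s)$, and Proposition~\ref{prop-nbm} identifies this with $d-d_{N_I}$. As a small aside, the isolated case needs neither regularity nor local-dimension theory: $\mathrm{Null}_I(s)=0$ means $J_I(s)$ has full column rank, so by the inverse function theorem $F_I$ is injective on a neighbourhood of $s$, and hence $s$ is isolated in $V(F_I)$.
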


\subsection{Making the substitution by nullspace basis matrix} 
\label{subsec-first}
\

Let $F(x)=0$ be a (large) polynomial system whose solution set $V(F)$ is positive-dimensional. Assume that $s$ is a smooth point of a positive-dimensional component of $V(F)$. After making the substitution as in (\ref{eq-FI}), the original system is reduced to a smaller system $F_I(x)=0$ with solution set $V(F_I)$. If we choose $I$ such that $|I|$ is large enough, then $F_I(x)=0$ has lower complexity. We can solve it by symbolic computation software, such as, Macaulay2, Oscar.jl and Singular \cite{Dec25,Eis-M2book02,GreuelPf08}. However, after making a substitution, $s$ may become an isolated solution of $F_I(x)=0$. By Corollary \ref{cor-ddi}, we obtain an elementary first-order necessary condition for making the substitution, which can keep $s$ to be positive-dimensional in $V(F_I)$. That is, we can use the partial solution $s_I$ indexed by $I\subseteq [n]$ such that $$\operatorname{rank}N(s)-\operatorname{rank}N_I(s)>0.$$

However, when the solution set is invariant under a positive-dimensional Lie group action, such as the Brent equations\cite{Heule21,li25def}, all solutions of the reduced system may belong to a single group orbit. So in this case, the solutions we found are not new up to the group action. In the next section, we will discuss how to choose $I$ such that $V(F_I)$ contains solutions in different group orbits.

\section{Substitution modulo a Lie group action}
\label{sec-subslg}
In this section, we discuss how to choose a partial solution such
that the solution set of the reduced polynomial system contains
points from distinct group orbits. In the following discussion, the
manifold $M$ is assumed to be the smooth part of a real affine algebraic
variety in $\mathbb{R}^n$. We assume that $G$ acts linearly on the ambient space $\mathbb{R}^n$, that $M$ is $G$-invariant, and that the action of
$G$ on $M$ is obtained by restriction.
\subsection{Group orbit and tangent basis matrix} 
\

Suppose that  $M\subseteq \mathbb{R}^n$ is an $m$-dimensional manifold. Let $G$ be a $d$-dimensional Lie group that acts on $M$. Let  $\mathfrak{g}$ be the Lie algebra of $G$. Therefore, as a linear space we have $\dim \mathfrak{g}=d$. The \emph{infinitesimal generators} of the group action are given by \cite[(1.48)]{olver93li}:
\begin{equation}\label{eq-infgen}
A\cdot x =\left.\frac{d}{dt}\exp(tA)x\right|_{t=0},
\quad x \in \mathbb{R}^n,\; A \in \mathfrak{g}.
\end{equation}
So we have $A\cdot x\in \mathbb{R}^n$.

Let $\{E_1,E_2,\ldots,E_k\}\subseteq \mathfrak{g}$ be a subset.
Let $\langle E_1,E_2,\ldots,E_k\rangle$ be the linear space spanned by $\{E_1,E_2,\ldots,E_k\}$.
Let $s\in M$ be a point. Let $Gs$ be the group orbit of $s$, which is a submanifold of $M$. 
\emph{The stabilizer group} $\operatorname{Stab}_G(s)$  of $s\in M$ is defined as the subgroup that fixes $s$:
$$\operatorname{Stab}_G(s)=\{g\in G|gs=s\}.$$
The group action is \emph{semi-regular} if $\dim \operatorname{Stab}_G(s)$ is constant for all $s\in M$. The group $G$ acts \emph{regularly} if the action is semi-regular, and, in addition, for each point $s\in M$ there exist arbitrarily small neighbourhoods $U$ of $s$  with the property that each orbit of $G$ intersects $U$ in a pathwise connected subset \cite{olver93li}.

Suppose that $\langle E_1,E_2,\ldots,E_k\rangle=\mathfrak{g}$. Let $\langle E_1\cdot s, E_2\cdot s,\ldots,E_k\cdot s\rangle \subseteq \mathbb{R}^n$ denote the linear space spanned by the infinitesimal generators $\{ E_i\cdot s|i=1,2,\ldots,k\}$. Let $T_s(Gs)$ be the \emph{tangent space} of $s$ for the submanifold $Gs$. We have the following proposition.
\begin{proposition}\cite[Prop. 2.65]{olver95e}\label{prop-tdim}
 For the submanifold $Gs$, the tangent space of $s$ is given by:
$$T_s (Gs)=\mathfrak{g}\cdot s=\langle E_1\cdot s, E_2\cdot s,\ldots,E_k\cdot s\rangle.$$
For the dimension of the orbit, we have
\begin{equation*}
\dim Gs=\dim \langle E_1\cdot s, E_2\cdot s,\ldots,E_k\cdot s\rangle.
\end{equation*}
\end{proposition}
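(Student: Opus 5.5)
The plan is to identify the orbit $Gs$ with the image of the orbit map and compute its tangent space by differentiating along one-parameter subgroups. Consider the smooth map
$$\phi_s: G\to M,\qquad \phi_s(g)=gs .$$
Its image is $Gs$. The orbit $Gs$ is an immersed submanifold of $M$, diffeomorphic to the homogeneous space $G/\operatorname{Stab}_G(s)$. Moreover, $\phi_s$ is a submersion onto $Gs$, i.e.\ it has constant rank. This holds because $\phi_s(hg)=h\,\phi_s(g)$, so $\phi_s$ is $G$-equivariant under left multiplication on $G$ and the action on $M$. Equivariance shows that the rank of $d\phi_s$ is the same at every $g$, and the constant rank theorem then identifies $T_s(Gs)$ with $\operatorname{Im}\, d\phi_s|_{e}$.

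Next I would compute $d\phi_s|_e$ on $T_eG\cong\mathfrak{g}$. For $A\in\mathfrak{g}$, the curve $t\mapsto\exp(tA)$ passes through $e$ with velocity $A$. Hence
$$d\phi_s|_e(A)=\left.\frac{d}{dt}\exp(tA)s\right|_{t=0}=A\cdot s,$$
by the definition \eqref{eq-infgen}. Therefore $T_s(Gs)=\{A\cdot s\mid A\in\mathfrak{g}\}=\mathfrak{g}\cdot s$.

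The map $A\mapsto A\cdot s$ is linear, being the differential of a smooth map; with a linear action it is simply the matrix product. Since $\langle E_1,\ldots,E_k\rangle=\mathfrak{g}$, every $A$ can be written as $\sum c_iE_i$, so $A\cdot s=\sum c_i\,E_i\cdot s$. This gives $\mathfrak{g}\cdot s=\langle E_1\cdot s,\ldots,E_k\cdot s\rangle$. The dimension formula follows because the dimension of a manifold equals the dimension of its tangent space at any point, and $Gs$ is homogeneous. By rank–nullity, this dimension is also $d-\dim\operatorname{Stab}_G(s)$, since $\ker d\phi_s|_e$ is the Lie algebra of the stabilizer.

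I expect the main obstacle to be the justification that $Gs$ is genuinely a (possibly immersed) submanifold with $T_s(Gs)=\operatorname{Im}d\phi_s|_e$. The image of a smooth map need not be a submanifold in general, so equivariance and the constant rank theorem (or the closed-subgroup theorem applied to $\operatorname{Stab}_G(s)$) must be invoked carefully. If the orbit is only immersed, the statement should be read with $Gs$ carrying the manifold structure of $G/\operatorname{Stab}_G(s)$; this is the setting of the result as stated in \cite{olver95e}.
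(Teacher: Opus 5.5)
Your proposal is correct. The paper gives no proof of its own and simply quotes Olver's Proposition 2.65. Your argument is the standard proof behind that citation: use the orbit map $\phi_s$; use equivariance to get constant rank, so that $Gs\cong G/\operatorname{Stab}_G(s)$ is an immersed submanifold with $T_s(Gs)=\operatorname{Im} d\phi_s|_e$; differentiate along $\exp(tA)$; and use linearity of $A\mapsto A\cdot s$ to pass to the spanning set $E_1,\ldots,E_k$. Your closing remark about $\ker d\phi_s|_e$ also recovers the next proposition the paper quotes, Olver's Proposition 2.67.
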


By Proposition \ref{prop-tdim}, with the notations above we have the following definition.
\begin{definition}\label{def-tsm}
The \emph{tangent basis matrix} (or infinitesimal generator coefficient matrix \cite{folver99mcf, Hubert07sa}) of $s$ with respect to the Lie group $G$ is defined as
\begin{equation}\label{eq-Ts}
T(s)=\left(E_1\cdot s, E_2\cdot s,\cdots,E_k\cdot s\right)
\in \mathbb{R}^{n\times k}.
\end{equation}
Given a subset $I\subseteq [n]$, let 
\begin{equation}\label{eq-Tis}
T_I(s)
\end{equation}
the submatrix of $T(s)$ obtained by selecting
the rows indexed by $I$. So if $|I|=r$, we have $T_I(s)\in \mathbb{R}^{r\times k}$.
\end{definition}

The following proposition gives the connection between the dimension of the stabilizer at a point and the dimension of the orbit through that point.
\begin{proposition}\cite[Prop.2.67]{olver95e}\label{prop-dimstab}
If $G$ is an $r$-dimensional Lie group acting on $M$, then the stabilizer $\operatorname{Stab}_G(x)$ of any point $x \in M$ has dimension $r-d$, where $d$ is the dimension of the orbit of $G$ through $x$.
\end{proposition}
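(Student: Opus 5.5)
The plan is to study the orbit map at $x$ and read off the stabilizer's dimension from its kernel, using Proposition \ref{prop-tdim} for the image. Fix $x\in M$ and consider the smooth orbit map
$$\phi_x: G\to M,\qquad \phi_x(g)=gx.$$
Its differential at the identity $e$ is a linear map $d\phi_x(e):\mathfrak{g}\to T_xM$. By the definition of the infinitesimal generators in \eqref{eq-infgen}, it sends $A\mapsto A\cdot x$. Hence its image is $\mathfrak{g}\cdot x$. By Proposition \ref{prop-tdim} this image is $T_x(Gx)$, which has dimension $d=\dim Gx$.

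Next I would identify the kernel. The plan is to show that $\ker d\phi_x(e)=\{A\in\mathfrak{g}\mid A\cdot x=0\}$ is exactly the Lie algebra of $\operatorname{Stab}_G(x)$. The stabilizer is the preimage $\phi_x^{-1}(x)$, which is a closed subgroup of $G$. By Cartan's closed subgroup theorem it is therefore an embedded Lie subgroup.

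\begin{itemize}
\item For one inclusion: if $A$ lies in the Lie algebra of the stabilizer, then $\exp(tA)x=x$ for all $t$. Differentiating at $t=0$ gives $A\cdot x=0$.
\item For the converse: if $A\cdot x=0$, consider the curve $\gamma(t)=\exp(tA)x$. It satisfies
$$\gamma'(t)=\frac{d}{d\tau}\exp((t+\tau)A)x\Big|_{\tau=0}=A\cdot\gamma(t),$$
so $\gamma$ is an integral curve of the vector field $y\mapsto A\cdot y$. The constant curve at $x$ is also an integral curve, since $A\cdot x=0$ makes $x$ a zero of this field. By uniqueness of integral curves, $\gamma(t)\equiv x$. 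Thus $\exp(tA)\in\operatorname{Stab}_G(x)$ for all $t$, and so $A$ lies in its Lie algebra.
\end{itemize}

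Finally, rank--nullity for $d\phi_x(e)$ gives
$$r=\dim\mathfrak{g}=\dim\ker d\phi_x(e)+\dim\operatorname{Im}d\phi_x(e)=\dim\operatorname{Stab}_G(x)+d.$$
Hence $\dim\operatorname{Stab}_G(x)=r-d$.

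The main obstacle is the kernel identification in the second step, in particular the converse direction, which rests on the integral-curve uniqueness argument. In the linear setting of this paper it becomes very concrete. The generator $A$ acts by a matrix, so $A\cdot x=Ax$ and $\exp(tA)x=\sum_k t^kA^kx/k!$. If $Ax=0$, then every term with $k\ge 1$ vanishes, so $\exp(tA)x=x$ directly.
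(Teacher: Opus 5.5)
Your proof is correct. The paper gives no argument of its own for this statement; it imports it from Olver right after importing Proposition~\ref{prop-tdim}, so there is no in-paper proof to match. Your derivation is rank--nullity for $d\phi_x(e)\colon A\mapsto A\cdot x$: the image is $T_x(Gx)$ by Proposition~\ref{prop-tdim}, and the kernel is the Lie algebra of the closed subgroup $\operatorname{Stab}_G(x)$. That is sound. The kernel identification is handled properly, in general via Cartan's theorem and uniqueness of integral curves, and more simply via the exponential series in the paper's linear setting.

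The usual textbook alternative for the kernel step uses equivariance, $\phi_x(hg)=h\,\phi_x(g)$, which forces $\phi_x$ to have constant rank. The constant-rank level set theorem then makes $\operatorname{Stab}_G(x)=\phi_x^{-1}(x)$ an embedded submanifold of codimension $\operatorname{rank} d\phi_x(e)$ at once, with no need for Cartan's theorem or flows. Passing to the injective immersion $G/\operatorname{Stab}_G(x)\to M$ then also proves Proposition~\ref{prop-tdim} itself.

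Your version, in exchange, is more elementary and isolates exactly what the paper later uses. The kernel identification plus rank--nullity already give $\dim\operatorname{Stab}_G(x)=\dim\mathfrak{g}-\operatorname{rank}T(x)$, the second formula of Corollary~\ref{cor-dimos}, without any appeal to the manifold structure of orbits. Proposition~\ref{prop-tdim} is needed only to rewrite that rank as $\dim Gx$.
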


From Proposition \ref{prop-dimstab}, we have the following corollary.
\begin{corollary}\label{cor-dimos}
Let $T(s)$ be the tangent basis matrix defined in (\ref{eq-Ts}) for a point $s\in M$. The dimension of the group orbit is given by 
$$\dim Gs= \operatorname{rank}T(s).$$
The dimension of the stabilizer is given by
$$\dim \operatorname{Stab}_G(s)=\dim \mathfrak{g}-\operatorname{rank}T(s).$$
In particular, an action is semi-regular if all the orbits have the same dimension.
\end{corollary}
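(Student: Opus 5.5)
The corollary follows directly from Proposition \ref{prop-tdim} and Proposition \ref{prop-dimstab}, so I would argue it in three short steps.

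First, the orbit dimension. Take $\{E_1,\ldots,E_k\}$ spanning $\mathfrak{g}$, as in Definition \ref{def-tsm}. Proposition \ref{prop-tdim} gives $\dim Gs=\dim\langle E_1\cdot s,\ldots,E_k\cdot s\rangle$. This span is exactly the column space of $T(s)$, and the dimension of a column space is the rank of the matrix. Hence $\dim Gs=\operatorname{rank}T(s)$.

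This value does not depend on which spanning set is chosen. Indeed, the map $A\mapsto A\cdot s$ is linear in $A$, since $G$ acts linearly and (\ref{eq-infgen}) is a derivative of $\exp(tA)x$. So any two spanning sets of $\mathfrak{g}$ produce column spaces equal to the same space $\mathfrak{g}\cdot s$, and Corollary \ref{cor-subfull} (with $I=[n]$) gives equal ranks.

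Second, the stabilizer dimension. Proposition \ref{prop-dimstab} with $r=\dim G=\dim\mathfrak{g}$ and $d=\dim Gs$ gives $\dim\operatorname{Stab}_G(s)=\dim\mathfrak{g}-\dim Gs$. Substituting the first step yields $\dim\operatorname{Stab}_G(s)=\dim\mathfrak{g}-\operatorname{rank}T(s)$.

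Third, the semi-regularity remark. Since $\dim\mathfrak{g}$ is a fixed constant, $\dim\operatorname{Stab}_G(s)$ is constant in $s$ exactly when $\dim Gs=\operatorname{rank}T(s)$ is constant in $s$. By the definition of semi-regular, the action is therefore semi-regular if (and only if) all orbits have the same dimension.

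None of these steps is a real obstacle. The only point needing care is the independence of $\operatorname{rank}T(s)$ from the chosen spanning set, and that is settled by the linearity of the infinitesimal action together with Corollary \ref{cor-subfull}.
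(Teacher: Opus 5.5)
Your proposal is correct and takes the same route as the paper. Proposition \ref{prop-tdim} identifies $\dim Gs$ with the dimension of the column space of $T(s)$, that is, $\operatorname{rank}T(s)$, and Proposition \ref{prop-dimstab} with $r=\dim\mathfrak{g}$ then gives the stabilizer formula and the semi-regularity remark. Your added check that $\operatorname{rank}T(s)$ does not depend on the chosen spanning set (via linearity of $A\mapsto A\cdot s$) is a harmless refinement that the paper leaves implicit.
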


\begin{definition}\label{def-tranint}
Let $K$, $L$ be two submanifolds of $M$. Suppose that the intersection $K\cap L$ is also a smooth manifold. For a point $s\in K\cap L$, we say $K\cap L$ is \emph{a local transverse intersection at} $s$, if
 $$T_s (K\cap L)=T_s K\cap T_s L.$$
If $T_s (K\cap L)=T_s K\cap T_s L$ for all $s\in K\cap L$, then we say $K\cap L$ is \emph{a transverse intersection}. 
\end{definition}
Here our definition of transverse intersection is weaker than \cite{lee12smooth}, which is called \emph{clean intersection} in \cite[Append. C.3]{Hor85III}.

Let $M\subseteq \mathbb{R}^n$ be a smooth $m$-dimensional manifold. Suppose that there is a Lie group $G$ that acts on $M$ regularly, and $\dim M>\dim G$. Then the connected components of the group orbits form a
partition of $M$. For a point $x\in M$, since $\dim M>\dim Gx$, every open neighbourhood of $x$ in $M$ intersects infinitely many group orbits.
Following the definition in \cite{lawson74fol,lee12smooth}, $M$ is endowed with \emph{a foliation}  whose \emph{leaves} consist of the connected group orbits $Gx$. That is, there exists a set of representatives $\mathcal{A}\subset M$ such that
\begin{equation*}
M=\bigsqcup_{x\in \mathcal{A}} Gx.
\end{equation*}

Let $s\in M$ be a point. Since $M\subseteq \mathbb{R}^n$, we let $s=(s_1,s_2,\ldots,s_n)$ be the coordinate of $s$. For an index set $I\subseteq [n]$, let 
\begin{equation}\label{eq-LI}
L_I=\{x \in \mathbb{R}^n\mid x_i=s_i~\text{for all}~i\in I\}  
\end{equation}
be an affine coordinate subspace. Consider the intersection 
\begin{equation}\label{eq-MI}
M_I=M\cap L_I,
\end{equation}
then we have
\begin{equation}\label{eq-sliceo}
M_I=\bigsqcup_{x\in \mathcal{A}} (Gx\cap L_I).
\end{equation}
Since $s\in Gs\cap L_I$, we have $Gs\cap L_I\neq \emptyset$ and $M_I\neq \emptyset$. In (\ref{eq-sliceo}), each connected component of $Gx\cap L_I$ is called \emph{an orbit slice}. From Definitions \ref{def-tsm} and \ref{def-tranint}, we have the following proposition.
\begin{proposition}\label{prop-dim}
Suppose that the action of $G$ on $M$ is regular.
Assume that $Gs\cap L_I$ is a connected smooth submanifold. Suppose that
$Gs$ and $L_I$ intersect transversely along $Gs\cap L_I$.
Let  $\dim_x Gs\cap L_I$ be the local dimension of $Gs\cap L_I$ at $x$. Then we have 
$$\dim_x Gs\cap L_I=\dim_s Gs\cap L_I=\operatorname{rank}T(s)-\operatorname{rank}T_I(s),\quad\text{for all}\quad x\in Gs\cap L_I.$$
The tangent space $T_s (Gs\cap L_I)$ consists of 
$$\{v|v\in \langle E_1\cdot s,\ldots, E_k \cdot s \rangle~\text{and}~ v_I=0\}.$$

In particular, if $\operatorname{rank}T_I(s)=\operatorname{rank}T(s)$, then $\dim Gs\cap L_I=0$, which implies $s$ is an isolated point of $Gs\cap L_I$.
\end{proposition}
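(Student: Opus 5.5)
The plan is to compute the tangent space of $Gs\cap L_I$ at $s$ first, then read off its dimension from Lemma \ref{lem-rankaai}, and finally move from $s$ to an arbitrary $x\in Gs\cap L_I$ using the $G$-equivariance of the tangent basis matrix.

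\textbf{Tangent space at $s$.} Since $L_I$ is an affine coordinate subspace, its tangent space at every point is
$$T_sL_I=\{v\in\mathbb{R}^n\mid v_I=0\}.$$
By the transversality hypothesis in Definition \ref{def-tranint}, $T_s(Gs\cap L_I)=T_s(Gs)\cap T_sL_I$. Proposition \ref{prop-tdim} gives $T_s(Gs)=\langle E_1\cdot s,\ldots,E_k\cdot s\rangle$, which is the column space $V_{T(s)}$ of the tangent basis matrix $T(s)$. Together these yield
$$T_s(Gs\cap L_I)=\{v\in\langle E_1\cdot s,\ldots,E_k\cdot s\rangle \mid v_I=0\},$$
which is the stated description.

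\textbf{Dimension at $s$.} The subspace above is exactly $V^I_{T(s)}$ in the notation of Lemma \ref{lem-rankaai} with $A=T(s)$. The lemma gives its dimension as $\operatorname{rank}T(s)-\operatorname{rank}T_I(s)$. Because $Gs\cap L_I$ is assumed to be a smooth submanifold, its local dimension at $s$ equals the dimension of its tangent space there. The ``in particular'' clause then follows at once: if $\operatorname{rank}T_I(s)=\operatorname{rank}T(s)$, the tangent space is zero, the manifold is $0$-dimensional near $s$, and so $s$ is isolated.

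\textbf{Constancy over all $x$.} This is the main obstacle. There are two routes.
\begin{enumerate}
\item Since $Gs\cap L_I$ is a connected smooth manifold, it has a single dimension, so the local dimension is automatically the same at every point. I would state this as the primary argument.
\item As a cross-check using the rank formula directly, take $x\in Gs\cap L_I$, so $x=gs$ with $x_I=s_I$. Applying the first two steps at $x$ (transversality holds along the whole intersection) gives $\dim_x=\operatorname{rank}T(x)-\operatorname{rank}T_I(x)$.
 \begin{itemize}
 \item By regularity and Corollary \ref{cor-dimos}, $\operatorname{rank}T(x)=\dim Gx=\dim Gs=\operatorname{rank}T(s)$.
 \item Using linearity of the action, $\mathfrak{g}\cdot(gs)=g(\mathfrak{g}\cdot s)$ via $\operatorname{Ad}$. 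However, the coordinate restriction to $I$ need not commute with $g$, so equality of $\operatorname{rank}T_I$ at $x$ and $s$ does not follow directly.
 \end{itemize}
 For this reason I would rely on connectedness (route 1) rather than on equivariance.
\end{enumerate}
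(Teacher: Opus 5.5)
Your proposal is correct and follows the paper's proof essentially step for step. The transversality hypothesis together with Proposition \ref{prop-tdim} identifies $T_s(Gs\cap L_I)$ with $\{v\in\langle E_1\cdot s,\ldots,E_k\cdot s\rangle \mid v_I=0\}$, Lemma \ref{lem-rankaai} gives its dimension, and constancy of the local dimension follows from $Gs\cap L_I$ being a connected smooth manifold (your route 1), exactly as in the paper; you are right that route 2 is unnecessary, since restricting to the coordinates in $I$ does not commute with the group action.
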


\begin{proof}
For a smooth manifold, the local dimension is a constant which is the dimension of the manifold. That is, 
$$\dim_x Gs\cap L_I=\dim_s Gs\cap L_I,\quad\text{for all}\quad x\in Gs\cap L_I.$$
In the following, we will show
$$\dim_s Gs\cap L_I=\operatorname{rank}T(s)-\operatorname{rank}T_I(s).$$

Since $Gs\cap L_I$ is smooth and local transverse at $s$, we have 
$$\dim_s Gs\cap L_I=\dim T_s (Gs\cap L_I),$$
and
$$T_s (Gs\cap L_I)=T_s Gs \cap T_s L_I.$$

The tangent space of $L_I$ at $s$ is
$$T_s L_I=\{x\in \mathbb{R}^n|x_i=0~\text{for}~i\in I\}.$$
Suppose that $\mathfrak{g}=\langle E_1, E_2,\ldots, E_k\rangle$. Then we have
\begin{align*}
 T_s (Gs\cap L_I)=&T_s (Gs) \cap T_s L_I \\
      =&\langle E_1\cdot s,\ldots, E_k \cdot s \rangle \cap \{x\in \mathbb{R}^n|x_i=0~\text{for}~i\in I\}
\end{align*}
So $v\in T_s (Gs\cap L_I)$ if and only if $v\in \langle E_1\cdot s,\ldots, E_k \cdot s \rangle$ and $v_I=0$. Thus, by Lemma \ref{lem-rankaai} we have
$$\dim T_s (Gs\cap L_I)=\operatorname{rank}T(s)-\operatorname{rank}T_I(s).$$

So if $\operatorname{rank}T(s)-\operatorname{rank}T_I(s)=0$, then $\dim_s Gs\cap L_I=0$, which implies $s$ is isolated.
\end{proof}

\subsection{Two typical intersection scenarios and  regular coordinate intersection}\label{subsec-two}
\

After intersection, in (\ref{eq-sliceo}) there are two typical scenarios that happen on $M_I$. We list them as follows.

\textbf{Condition A:}
The first case is that $M_I$ is also a foliation whose leaves consist of lower dimensional orbit slices.
More precisely, let
$$\mathcal A_I=\{x\in\mathcal A\mid Gx\cap L_I\neq \emptyset\}.$$
For each $x\in\mathcal A_I$, assume that each connected component of
$Gx\cap L_I$ is a smooth submanifold of $M_I$, and that $Gx$ and $L_I$ intersect transversely along $Gx\cap L_I$. Suppose that these orbit slices are connected and have the same dimension. Then the sets $Gx\cap L_I$ form leaves of the foliation on $M_I$.
Then by Proposition \ref{prop-dim}, we have
$$\dim(Gx\cap L_I)=\dim(Gs\cap L_I)=\operatorname{rank}T(s)-\operatorname{rank}T_I(s),
\quad\text{for all}\quad x\in \mathcal{A}_I.$$
So if 
$$\dim M_I>\dim(Gs\cap L_I),$$
then an open neighbourhood of $s$ in
$M_I$ intersects infinitely many distinct group orbits.

\textbf{Condition B:} The second case is $M_I=Gs\cap L_I$. So in this case, $M_I$ is just an orbit slice. Thus, we have $\dim M_I=\dim Gs\cap L_I$.

Following Definition 5.3 of \cite{olver12mf} (see also, \cite[Sect. 8]{olver99c}), we give the following definition.
\begin{definition} 
For $M_I$ defined in (\ref{eq-MI}), we call it \emph{a regular coordinate intersection} if $M_I$ satisfies Condition A above.
Moreover, suppose that $\dim M_I=d$. If $\dim Gs\cap L_I=0$, the regular coordinate intersection $M_I$ is called  \emph{a $d$-dimensional local coordinate cross-section}. 
If a $d$-dimensional local coordinate cross-section intersects each orbit at most once, we call it \emph{a $d$-dimensional regular coordinate cross-section.}
\end{definition}

The main difference between local coordinate cross-section and regular coordinate cross-section is that the same group
orbit may intersect $M_I$ more than once \cite{olver95e,olver99c,olver12mf}.

\subsection{The relation between the tangent space and the nullspace}
\

For the polynomial system $F(x)=0$ as in (\ref{eq-FPE}), let $V(F)\subseteq \mathbb{R}^n$ be the set of solutions. 
We say that $G$ is an \emph{isotropy group} (or symmetry group) of the solution set $V(F)$ if the solution set $V(F)$ is invariant under the group action of $G$, that is,
$$g(V(F))=V(F)~\text{for~all}~g\in G.$$
Here we use the term ``isotropy group'' \cite{Bur15,deGr78-1}, which is called \emph{the symmetry group} in \cite{olver93li,olver95e,olver99c}.

If a Lie group $G$ is an isotropy group of a solution set $V(F)$, whose Lie algebra is $\mathfrak{g}$, by Theorem 2.8 of \cite{olver93li} (see also \cite[Theorem 2.79]{olver95e}), we have the following property.
To be self-contained, we give the proof here.
\begin{proposition}\label{prop-jas}
For a solution $s\in V(F)$, let $J(s)\in \mathbb{R}^{m\times n}$ be the corresponding Jacobian matrix as in (\ref{eq-jac}). 
Then we have
$$J(s)(A\cdot s)=0,~where~A\in \mathfrak{g},$$
where $A\cdot s\in \mathbb{R}^n$ is the infinitesimal generator defined in (\ref{eq-infgen}).
\end{proposition}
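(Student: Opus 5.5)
The plan is to differentiate the identity $F(\exp(tA)s)=0$ at $t=0$. Since $G$ is an isotropy group of $V(F)$ and $s\in V(F)$, for every $A\in\mathfrak{g}$ and every real $t$ the element $g_t=\exp(tA)\in G$ satisfies $g_t s\in V(F)$. Hence the curve $\gamma(t)=\exp(tA)s$ lies entirely in $V(F)$, and we have the identity
$$F(\gamma(t))=0\quad\text{for all } t\in\mathbb{R}.$$
Equivalently, $f_i(\gamma(t))=0$ for each $i=1,\ldots,m$.

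The next step is to check that $\gamma$ is smooth with $\gamma(0)=s$ and $\gamma'(0)=A\cdot s$. Because $G$ acts linearly on $\mathbb{R}^n$, we may regard $\exp(tA)$ as a matrix exponential acting on $s$, so $\gamma$ is real-analytic in $t$. We have $\gamma(0)=s$, and by the definition (\ref{eq-infgen}) of the infinitesimal generator, $\gamma'(0)=A\cdot s$. Each $f_i$ is a polynomial, so $f_i\circ\gamma$ is differentiable, and the chain rule gives
$$0=\left.\frac{d}{dt}f_i(\gamma(t))\right|_{t=0}=\sum_{j=1}^n\frac{\partial f_i}{\partial x_j}(s)\,(A\cdot s)_j .$$
Stacking these identities for $i=1,\ldots,m$ gives exactly $J(s)(A\cdot s)=0$, using the Jacobian defined in (\ref{eq-jac}).

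The only point needing care is justifying that $\gamma$ stays in $V(F)$ for all $t$ near $0$. This is immediate, because invariance $g(V(F))=V(F)$ is assumed for every $g\in G$, and the elements $\exp(tA)$ lie in $G$ (in the identity component). No smoothness of $s$ is needed, since the argument differentiates a composite function vanishing identically along a curve rather than using the manifold structure of $V(F)$. I do not expect any real obstacle. At most one should remark that $A\cdot s$ is well defined because the action is linear, so the derivative in (\ref{eq-infgen}) exists.

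As a consequence, $A\cdot s\in\mathrm{Null}(s)$ for all $A\in\mathfrak{g}$, that is, $T_s(Gs)\subseteq\mathrm{Null}(s)$.
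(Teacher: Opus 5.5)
Your proposal is correct and follows essentially the same route as the paper: since $G$ preserves $V(F)$, the identity $F(\exp(tA)s)=0$ holds for all $t$, and differentiating it at $t=0$ with the chain rule gives $J(s)(A\cdot s)=0$. Your componentwise computation, and your remark that smoothness of $s$ is not needed, are accurate refinements of that same argument.
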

\begin{proof}
Since $G$ is an isotropy group of $V(F)$, we have
$$
F(gs)=0 \quad \text{whenever } F(s)=0, \; g \in G.
$$

Let $A \in \mathfrak{g}$ and consider the one-parameter subgroup $g(t)=\exp(tA)$ in $G$, where $t~\in \mathbb{R}$. Then
\begin{equation}\label{eq-fexpt}
F(\exp(tA)s)=0,
\end{equation}
since $s \in V(F)$ and $G$ preserves the solution set.
In (\ref{eq-fexpt}), taking derivative with respect to $t$ on both sides, 
and by the chain rule we have
$$
\frac{d}{dt} F(\exp(tA)s)
=
J(\exp(tA)s) \frac{d}{dt}(\exp(tA)s)=0.
$$
Then evaluating at $t=0$, we obtain
$$
J(s) \left.\frac{d}{dt}(\exp(tA)s)\right|_{t=0}
= J(s)(A\cdot s)=0.
$$
\end{proof}

Let $d=\operatorname{rank}N(s)$ and $d_{N_I}=\operatorname{rank}N_I(s)$ be the rank of $N(s)$ and $N_I(s)$ defined in (\ref{eq-ns}) and (\ref{eq-nis}), respectively.
Let $d_T=\operatorname{rank}T(s)$ and $d_{T_I}=\operatorname{rank}T_I(s)$ be the rank of $T(s)$ and $T_I(s)$ defined in (\ref{eq-Ts}) and (\ref{eq-Tis}), respectively.
By Proposition \ref{prop-jas}, we have the following corollary.
\begin{corollary}\label{cor-tti}
Let $F(x)=0$ be a polynomial system with solution set $V(F)$. Suppose that a Lie group $G$ is an isotropy group of $V(F)$. For a solution $s\in V(F)$, the tangent space $T_s (Gs)$ is a linear subspace of the nullspace $\mathrm{Null}(s)$. For any $I\subseteq [n]$, we have
\begin{equation}\label{eq-dnt}
d-d_{N_I}\geq d_T-d_{T_I},
\end{equation}
and $d_{N_I}\geq d_{T_I}$.
In particular, if $d=d_T$, then we have $d_{N_I}= d_{T_I}$ and $d-d_{N_I}= d_T-d_{T_I}$.
\end{corollary}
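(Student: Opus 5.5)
The plan is to reduce Corollary \ref{cor-tti} to the linear-algebra facts already established in Lemma \ref{lem-subfull} and Corollary \ref{cor-subfull}, applied to the column spaces of $T(s)$ and $N(s)$.

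\emph{Step 1: containment of tangent space in nullspace.} By Proposition \ref{prop-jas}, $J(s)(E_i\cdot s)=0$ for each generator $E_i$ of $\mathfrak{g}$. Hence every column of $T(s)$ lies in $\mathrm{Null}(s)$. By Proposition \ref{prop-tdim}, $T_s(Gs)=\langle E_1\cdot s,\ldots,E_k\cdot s\rangle$, so
$$T_s(Gs)\subseteq \mathrm{Null}(s).$$
Since the columns of $N(s)$ form a basis of $\mathrm{Null}(s)$, this reads $V_{T}\subseteq V_{N}$, where $V_T$ and $V_N$ denote the column spans of $T(s)$ and $N(s)$ respectively.

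\emph{Step 2: the inequalities.} Apply Lemma \ref{lem-subfull} with $A=N(s)$ and $B=T(s)$, which is legitimate since $V_B\subseteq V_A$. This gives at once
$$d_{N_I}=\operatorname{rank}N_I(s)\geq \operatorname{rank}T_I(s)=d_{T_I},$$
$$d-d_{N_I}=\operatorname{rank}N(s)-\operatorname{rank}N_I(s)\geq \operatorname{rank}T(s)-\operatorname{rank}T_I(s)=d_T-d_{T_I}.$$

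\emph{Step 3: the equality case.} If $d=d_T$, the inclusion $V_T\subseteq V_N$ is between spaces of equal dimension, namely $\operatorname{rank}T(s)$ and $\operatorname{rank}N(s)=d$, so $V_T=V_N$. Corollary \ref{cor-subfull} then yields $d_{N_I}=d_{T_I}$ and $d-d_{N_I}=d_T-d_{T_I}$.

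There is no real obstacle here. The only point needing care is that $N(s)$ spans all of $\mathrm{Null}(s)$, so $\operatorname{rank}N(s)=\dim\mathrm{Null}(s)=d$. This holds by construction in \eqref{eq-ns}.
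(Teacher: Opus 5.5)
Your proposal is correct and matches the paper's argument. The paper also gets $T_s(Gs)\subseteq\mathrm{Null}(s)$ from Proposition \ref{prop-jas} and then simply cites Lemma \ref{lem-subfull} and Corollary \ref{cor-subfull}. You additionally spell out the step the paper leaves implicit: $d=d_T$ together with the inclusion forces the column spaces of $T(s)$ and $N(s)$ to coincide, which is exactly the hypothesis Corollary \ref{cor-subfull} needs.
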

\begin{proof}
It follows from Lemma \ref{lem-subfull} and Corollary \ref{cor-subfull}.
\end{proof}

\subsection{Making the substitution and modulo the Lie group action}
\label{subsec-ntmethod}
\

We continue the discussions in Section \ref{subsec-first}. Suppose that
$F(x)=0$ is a polynomial system with a positive-dimensional solution set
$V(F)$, and $G$ is an isotropy group of $V(F)$. Suppose that $s\in V(F)$ is smooth and belongs to a positive-dimensional solution component. Let $s=(s_1,s_2,\ldots,s_n)$. Let $N(s)$ be the nullspace basis matrix of $s$ defined in (\ref{eq-ns}). Let $T(s)$ be the tangent basis matrix of $s$ defined in (\ref{def-tsm}). Choosing a subset $I\subseteq [n]$, we obtain the corresponding submatrices $N_I(s)$ and $T_I(s)$ of  $N(s)$ and $T(s)$, respectively. Let $s_I$ be the partial solution of $s$ that we want to fix and make a substitution. Let $L_I$ be the corresponding affine linear subspace defined in (\ref{eq-LI}). Let $F_I(x)=0$ be the corresponding reduced polynomial system defined in (\ref{eq-FI}).

In Corollary \ref{cor-tti}, (\ref{eq-dnt}) can be rewritten as 
\begin{equation}\label{eq-ngeqt}
\operatorname{rank}N(s)-\operatorname{rank}N_I(s)\geq \operatorname{rank}T(s)-\operatorname{rank}T_I(s),
\end{equation}
for any $I\subseteq [n]$. The problem is how to choose an index set $I$ such that $V(F_I)$ is positive-dimensional and contains solutions in distinct group orbits. In the following, we give a practical condition, which can help us to find the available candidate index set $I$.

After choosing $I$, by (\ref{eq-ngeqt}) two conditions will happen. The first one is:
\begin{equation}\label{eq-ntok}
\operatorname{rank}N(s)-\operatorname{rank}N_I(s)>\operatorname{rank}T(s)-\operatorname{rank}T_I(s),
\end{equation}
which is a natural first-order criterion for selecting a candidate index set
$I$. It implies that the reduced system has linear directions at $s$
which do not belong to the tangent space of the orbit slice.
If a neighbourhood of $s$ in $V(F_I)$ admits a regular foliation
whose leaves are orbit slices with constant dimension, then
(\ref{eq-ntok}) implies that this neighbourhood intersects infinitely many group orbits. A typical case is
\begin{equation*} 
\operatorname{rank}T(s)-\operatorname{rank}T_I(s)=0.
\end{equation*}
Under the transverse intersection assumption, the orbit slice $Gs\cap L_I$
is then zero-dimensional at $s$. The corresponding reduced solution
set provides a local coordinate cross-section \cite{Hubert07sa,Kogan23inv,olver12mf}. If it intersects each nearby orbit at most once, it is a regular coordinate cross-section.

The second case is: 
$$\operatorname{rank}N(s)-\operatorname{rank}N_I(s)=\operatorname{rank}T(s)-\operatorname{rank}T_I(s).$$
In this case, the first-order condition cannot tell us if there are 
linear directions at $s$ which do not belong to the tangent space of the orbit slice. So in this case,  we do not use $I$ as a practical candidate.

Although the above discussions are assumptions, after making a substitution, the reduced polynomial system $F_I(x)=0$ can be solved.
Then we can verify our assumptions. For example,  for $x\in V(F_I)$ we can check if $\operatorname{rank}T(x)-\operatorname{rank}T_I(x)=\operatorname{rank}T(s)-\operatorname{rank}T_I(s)$ and decide if $\dim_x V(F_I)>\operatorname{rank}T(x)-\operatorname{rank}T_I(x)$, generically. In particular, for Brent equations, we can use the current method to see if they are in the same orbit \cite{berger22,Heule21,kaumoo22,Tichav-21}.
When $\operatorname{rank}T(s)=\operatorname{rank}T_I(s)$, the solution set may provide a local coordinate cross-section. After solving the reduced polynomial system we can check if the Lie group action is quotiented, locally or globally. In the following sections, we will discuss the application of (\ref{eq-ntok}) to Brent equations.

\subsection{A method for finding the candidate index sets}\label{subs-findI}
\

In this section, we present a method for finding a
candidate index set $I\subseteq[n]$ satisfying inequality~(\ref{eq-ntok}).

A vector $v\in\mathbb{R}^n$ is called \emph{sparse} if most of its
entries are zero. Suppose that $I\subseteq[n]$ has large cardinality
and satisfies inequality~(\ref{eq-ntok}). Then there exists a vector
$$v\in \operatorname{Null}(s)\setminus T_s(G\cdot s)$$
such that $v_I=0$. Since $|I|$ is large, the vector $v$ is sparse.
Thus, one possible approach to finding $I$ is by solving suitable linear equations and searching for sparse solutions. In this paper, however, we use another practical greedy procedure, which is described below.

For $I\subseteq[n]$, define
\begin{equation}\label{eq-DI}
D(I)=
\bigl(\operatorname{rank}N(s)-\operatorname{rank}N_I(s)\bigr)
-\bigl(\operatorname{rank}T(s)-\operatorname{rank}T_I(s)\bigr).
\end{equation}
We call $D(I)$ the \emph{rank gap associated to $I$}. For every
$I\subseteq[n]$, we have
$$0\leq D(I)\leq
\operatorname{rank}N(s)-\operatorname{rank}T(s).$$
In particular, when $I=\emptyset$, we have 
$D(I)=\operatorname{rank}N(s)-\operatorname{rank}T(s)$; when $I=[n]$, we have $D(I)=0$. Suppose that we want to find $I$ such that $|I|$ is large enough and the gap 
$$D(I)\geq k,$$
where
$$0<k\leq \operatorname{rank}N(s)-\operatorname{rank}T(s).$$
Our (greedy) search proceeds as follows. Starting from
$I_0=\emptyset$, at each step $i$, we choose a
$u_i\in [n]\setminus I_i$ such that 
$$D(I_{i+1})\geq k,$$
where $I_{i+1}=I_i\cup\{u_i\}$.
The procedure terminates when no such  $u_i$ exists.
For Brent equations, the Julia codes implementing this method can be found in \cite{L-23}.

\section{The tangent and nullspace basis matrices for the Brent equations}
\label{sec-tnsbe}

In this section, we discuss how to compute the tangent and nullspace basis matrices for a solution of the Brent equations. We first recall the definition of the Brent equations and the associated isotropy group.

Let  $\langle m,n,p\rangle$ denote the matrix multiplication tensor
for multiplying an $m\times n$ matrix by an $n\times p$ matrix \cite{Bur15,dumas26,Lan17}. It is given by
$$
\langle m,n,p\rangle
=\sum_{i=1}^{m}\sum_{j=1}^{n}\sum_{k=1}^{p}e_{ij}\otimes e_{jk}\otimes e_{ki}\in \mathbb{R}^{m\times n}\otimes\mathbb{R}^{n\times p}\otimes\mathbb{R}^{p\times m},
$$
where $e_{ij}$ denotes the matrix with a 1 in the $i$th row and $j$th column, and other entries are zero. 
A rank-$r$ tensor decomposition of $\langle m,n,p\rangle$ can be expressed by
\begin{equation}\label{eq-mnpr}
\langle m,n,p\rangle
=\sum_{t=1}^{r} X_{t}\otimes Y_{t}\otimes Z_{t},  
\end{equation}
where
\begin{equation}\label{eq-brentxyz}
X_{t}=\left(x^{(t)}_{i_1,i_2}\right)\in \mathbb{R}^{m\times n},\qquad
Y_{t}=\left(y^{(t)}_{j_1,j_2}\right)\in \mathbb{R}^{n\times p},\qquad
Z_{t}=\left(z^{(t)}_{k_1,k_2}\right)\in \mathbb{R}^{p\times m},
\end{equation}
and $k_2,i_1\in \{1,2,\dots,m\}$, $i_2,j_1\in\{1,2,\dots,n\}$ and
$j_2,k_1\in\{1,2,\dots,p\}$.

Finding a tensor decomposition of $\langle m,n,p \rangle$ with rank $r$ as in (\ref{eq-mnpr}) is equivalent to finding solutions of a system of polynomial equations called the \emph{Brent equations} \cite{Brent70,Heule21,Smir-13}, which is given as follows:
\begin{equation}\label{eq-brent}
\sum_{t=1}^{r} x_{i_1,i_2}^{(t)}y_{j_1,j_2}^{(t)}z_{k_1,k_2}^{(t)}=
\delta_{i_2,j_1}\delta_{j_2,k_1}\delta_{k_2,i_1},
\end{equation}
where $\delta_{ij}$ is the Kronecker symbol, $x_{i_1,i_2}^{(t)}$, $y_{j_1,j_2}^{(t)}$ and $z_{k_1,k_2}^{(t)}$ are viewed as unknown variables.

We call the polynomial system in (\ref{eq-brent}) \emph{the Brent equations of type $[m,n,p|r]$}, and denote it by  $B(m,n,p|r)$. Thus, the polynomial system $B(m,n,p|r)$ has $(mn+np+pm)r$ variables and $(mnp)^2$ equations. Usually, $B(m,n,p|r)$ is an overdetermined polynomial system. The solution set of $B(m,n,p|r)$ in $\mathbb{R}^{(mn+np+pm)r}$ is denoted by $\mathrm{V}(m,n,p|r)$. Using the same symbol, $B(m,n,p|r)$ can also be viewed as a polynomial mapping:
$$B(m,n,p|r)\colon \mathbb{R}^{(mn+np+pm)r}\to \mathbb{R}^{(mnp)^2}.$$

For later use, we specify an ordering of the coordinates of
$\mathbb{R}^{(mn+np+pm)r}$ corresponding to the variables in
(\ref{eq-brentxyz}).

\begin{notation}\label{nota-coord}
For any matrix $U=(u_{i,j})\in\mathbb{R}^{a\times b}$, the well-known columnwise vectorization operation
$$\mathrm{Vec} \colon \mathbb{R}^{a\times b} \to \mathbb{R}^{ab\times1}$$
is defined by concatenating the columns of $U$ as a column vector of $\mathbb{R}^{ab\times1}$ (or $\mathbb{R}^{ab}$). So if $v=\mathrm{Vec}(U)$, we have $v_{(j-1)a+i} =u_{i,j}$.
For $t=1,2,\dots,r$, let
$$\mathbf{v}_t=\left(
        \begin{array}{c}
         \mathrm{Vec}(X_t) \\
         \mathrm{Vec}(Y_t) \\
         \mathrm{Vec}(Z_t) \\
        \end{array}
      \right)\in \mathbb{R}^{mn+np+pm}.$$
Then the variables in (\ref{eq-brentxyz}) can be ordered by
\begin{equation}\label{eq-coordbrentvar}
w=\left(
    \begin{array}{c}
      \mathbf{v}_1 \\
      \mathbf{v}_2 \\
       \vdots\\
       \mathbf{v}_r\\
    \end{array}
  \right)=\left(
    \begin{array}{c}
      w_1 \\
      w_2 \\
      \vdots\\
      w_k\\
    \end{array}
  \right)
  \in \mathbb{R}^{(mn+np+pm)r},
\end{equation}
where $k=(mn+np+pm)r$. So for example, we have $w_{(t-1)(mn+np+pm)+(j-1)m+i}=x_{i,j}^{(t)}$. 
\end{notation}

For solutions in $\mathrm{V}(m,n,p|r)$, we can express them in two typical forms. The first is \emph{the matrix form}. Let
$\{U_{t}\}_{t=1}^r\subseteq \mathbb{R}^{m\times n}$, $\{V_{t}\}_{t=1}^r\subseteq \mathbb{R}^{n\times p}$ and $\{W_{t}\}_{t=1}^r\subseteq \mathbb{R}^{p\times m}$,
such that
\begin{equation*}
\langle m,n,p\rangle
=\sum_{t=1}^{r} U_{t}\otimes V_{t}\otimes W_{t}.
\end{equation*}

Then by (\ref{eq-mnpr}) we obtain a solution in $\mathrm{V}(m,n,p|r)$ denoted by $s$, and  write it in matrix form as
\begin{equation}\label{eq-suvw}
s=(U_1,V_1,W_1,\ldots,U_r,V_r,W_r).
\end{equation}
In (\ref{eq-suvw}), we call $(U_i,V_i,W_i)$ the \emph{$i$-th layer} of $s$.

The second is \emph{the vector form}.  Using Notation \ref{nota-coord}, we express $s$ in vector form. It is given as follows. For $t=1,2,\ldots,r$, define
\begin{equation*}
\mathbf{s}_t=\left(
   \begin{array}{c}
         \mathrm{Vec}(U_t) \\
         \mathrm{Vec}(V_t) \\
         \mathrm{Vec}(W_t) \\
          \end{array}  \right)\in \mathbb{R}^{mn+np+pm}.
\end{equation*}
Then for $s$ in (\ref{eq-suvw}) we define the column-wise vectorization operation which is also denoted by "\textrm{Vec}"  as follows:
\begin{equation}\label{eq-vecuvw}
\mathrm{Vec}(s)
=\left(
    \begin{array}{c}
      \mathbf{s}_1 \\
      \mathbf{s}_2 \\
      \vdots\\
      \mathbf{s}_r\\
    \end{array}
  \right)
=\left(
    \begin{array}{c}
      s_1 \\
      s_2 \\
      \vdots\\
      s_k\\
    \end{array}
  \right)\in \mathbb{R}^{(mn+np+pm)r},
  \end{equation}
where $k=(mn+np+pm)r$. Then without of confusion, we identify 
$\mathrm{Vec}(s)$ in (\ref{eq-vecuvw}) with  $s$ in (\ref{eq-suvw}). 
The vector form above is used when we choose the fixed partial solutions.

\begin{remark}
There are other forms for solutions in $\mathrm{V}(m,n,p|r)$, for example, the LRP representation used in \cite{dumas25,dumas26}.
\end{remark}

\subsection{The isotropy group action on $\mathrm{V}(m,n,p|r)$}\label{subsec-isogroup}
\

It is well-known that there are four types of group actions which preserve $\mathrm{V}(m,n,p|r)$. In this paper, we call the group generated by these groups \emph{the isotropy group} of $\mathrm{V}(m,n,p|r)$, see for example \cite{Bur15,deGr78-1,Heule21}. Let $s\in\mathrm{V}(m,n,p|r)$ be a solution and write it in the matrix form defined in (\ref{eq-suvw}). Then the four types of groups and corresponding actions on $s$ are given as follows.

(1) \textbf{The de Groote action}

Let 
\begin{equation}\label{eq-gmnp}
\mathbf{G}(m,n,p)=\mathrm{GL}_m(\mathbb{R})\times \mathrm{GL}_n(\mathbb{R}) \times \mathrm{GL}_p(\mathbb{R})
\end{equation}
denote the direct product of $\mathrm{GL}_m(\mathbb{R})$, $\mathrm{GL}_n(\mathbb{R})$ and  $ \mathrm{GL}_p(\mathbb{R})$. For $(A,B,C) \in \mathbf{G}(m,n,p)$, it gives a group action $T(A,B,C)$ on $s$ defined by 
\begin{equation}\label{eq-degroote}
T(A,B,C)\cdot s=\left(AU_1B^{-1},BV_1C^{-1},CW_1A^{-1},\ldots,
AU_rB^{-1},BV_rC^{-1},CW_rA^{-1}
\right)
\end{equation}
We call the group action defined in (\ref{eq-degroote}) the \emph{de Groote action} (or the sandwiching action \cite{kaumoo22}). In this case, $\mathbf{G}(m,n,p)$ is called \emph{the de Groote group} of type $(m,n,p)$. 
It is well-known that (see, e.g., \cite{olver93li}) the dimension of $\mathbf{G}(m,n,p)$ is 
\begin{equation*}
\dim \mathbf{G}(m,n,p)=m^2+n^2+p^2.
\end{equation*}
It is not hard to see that
the subgroup
\begin{equation}\label{eq-g0mnp}
\mathbf{G}_0(m,n,p)=\{(\lambda I_m,\lambda I_n, \lambda I_p)|\lambda\in \mathbb{R}^{\times} \}\subseteq \mathbf{G}(m,n,p)
\end{equation}
belongs to the stabilizer of de Groote action for all $s$, whose dimension is one.

(2) \textbf{Layer scaling action}

Let 
\begin{equation*} 
\mathbf{L}(2r)=\{(\lambda_1,\mu_1,\ldots,\lambda_r,\mu_r)\in \mathbb{R}^{2r}|~\text{where}~\lambda_i,\mu_i\in \mathbb{R}^\times~\text{and}~i=1,2,...,r\}.
\end{equation*}
Then we can see that $\mathbf{L}(2r)$ is a Lie group of dimension $2r$, which is called \emph{the layer scaling group}.
For $L \in \mathbf{L}(2r)$, it acts on $s$ by 
\begin{equation*}
L \cdot s=\left( \lambda_1 U_1, \mu_1 V_1, (\lambda_1\mu_1)^{-1}W_1, \ldots,
\lambda_r U_r, \mu_r V_r, (\lambda_r\mu_r)^{-1}W_r
\right).
\end{equation*}
\begin{remark}\label{rem-overlap}
Let 
\begin{equation*} 
\mathbf{L}_0(2r)=\{(\lambda,\mu,\ldots,\lambda,\mu)\in \mathbb{R}^{2r}|~\text{where}~\lambda,\mu\in \mathbb{R}^\times\}\subseteq
\mathbf{L}(2r)
\end{equation*}
be the subgroup of $\mathbf{L}(2r)$ such that $\lambda=\lambda_i$ and 
$\mu=\mu_i$ for $i=1,2,\ldots,r$. Then the group action of $\mathbf{L}_0(2r)$ overlaps with a subgroup of $\mathbf{G}(m,n,p)$. 
In fact, it can be obtained from $\mathbf{G}(m,n,p)$ by setting $A=\lambda I_m$, $B= I_n$ and $C=\mu^{-1} I_p$. It is not hard to see that the dimension of $\mathbf{L}_0(2r)$ is 2.
\end{remark}

(3) \textbf{The action by permutation of rank-one summands}

Let $S_r$ denote the permutation group of order $r$. 
For $\sigma\in S_r$, define an action on $s$ by
\begin{equation*} 
\sigma\cdot s=  \left(U_{\sigma^{-1}(1)},V_{\sigma^{-1}(1)},W_{\sigma^{-1}(1)},\ldots,
U_{\sigma^{-1}(r)},V_{\sigma^{-1}(r)},W_{\sigma^{-1}(r)}\right).
\end{equation*}
We call this type of isotropy group \emph{the permutation summands group} and denote it by $S_r$.

(4) \textbf{The action by the $S_3$-symmetry}

This type of isotropy group is special. When $m\neq n\neq p$, this type of group action is trivial (see, e.g., Theorem 4.12 of \cite{Bur15}). Here, we just give the (typical) case: $m=n=p$.

When $m=n=p$, the matrix multiplication tensor is $\langle n,n,n\rangle $,
so three tensor factors have the same shape. There is an additional discrete symmetry. The symmetry group is isomorphic to $S_3$. It is generated by one cyclic symmetry and one transpose-swap symmetry.

First, define the cyclic symmetry action
$$
\theta\cdot s=(W_1,U_1,V_1,\ldots, W_r,U_r,V_r).
$$
Then we have that $\theta$ has order $3$.
Second, define the transpose-swap symmetry
$$\tau\cdot s=(U_1^{t},W_1^{t},V_1^{t},\ldots, U_r^{t},W_r^{t},V_r^{t}).
$$
Then $\tau$ has order $2$. Together, $\theta$ and $\tau$ generate the group that is isomorphic to  $S_3$. We call this type of isotropy group \emph{the $S_3$-symmetry group}. Since it depends on $m$, $n$ and $p$, we denote it by $S(m,n,p)$.

The (full) isotropy group of $\mathrm{V}(m,n,p|r)$ is generated by the four types of groups introduced above. The groups $\mathbf{G}(m,n,p)$ and
$\mathbf{L}(2r)$ are Lie groups. $S_r$ and $S(m,n,p)$ are discrete groups.

Let 
\begin{equation}\label{eq-Imnpr1}
\mathcal{I}_1(m,n,p|r)=\mathbf{G}(m,n,p)\times \mathbf{L}(2r)
\end{equation}
be the direct product of the de Groote and layer scaling groups. If we let $\mathcal{I}(m,n,p|r)$ denote the isotropy group, then from discussions above we have \cite{Bur15,deGr78-1}:
\begin{equation}\label{eq-Inmpr}
\mathcal{I}(m,n,p|r)=\mathcal{I}_1(m,n,p|r)\rtimes (S_r \times S(m,n,p)).
\end{equation}
Let 
$$\operatorname{Stab}_{\mathcal{I}}(s)$$
denote the stabilizer of $s$ under the action of
$\mathcal{I}(m,n,p|r)$. Then $\mathbf{G}_0(m,n,p)$ is a (one-dimensional) subgroup of $\operatorname{Stab}_{\mathcal{I}}(s)$.

Given a solution $s$, let $\mathcal{I}(m,n,p|r)\cdot s$ be the group orbit under the isotropy group action. Then we have the following proposition.
\begin{proposition}\label{prop-dimIs}
Let $\dim \mathcal{I}(m,n,p|r)\cdot s$ be the dimension of $\mathcal{I}(m,n,p|r)\cdot s$, we have
\begin{equation*}
\dim \mathcal{I}(m,n,p|r)\cdot s \leq
m^2+n^2+p^2+2r-3.
\end{equation*}
If $$\dim \mathcal{I}(m,n,p|r)\cdot s
=m^2+n^2+p^2+2r-3,$$ then $s$ has no other positive-dimensional stabilizer except $\mathbf{G}_0(m,n,p)$.
\end{proposition}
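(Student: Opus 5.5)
The plan is to reduce the statement to a dimension count for the identity component of the isotropy group. By (\ref{eq-Inmpr}), $\mathcal{I}(m,n,p|r)$ is a finite extension of the Lie group $\mathcal{I}_1(m,n,p|r)=\mathbf{G}(m,n,p)\times\mathbf{L}(2r)$. Since $S_r\times S(m,n,p)$ is finite, the orbit $\mathcal{I}(m,n,p|r)\cdot s$ is a finite union of translates of $\mathcal{I}_1\cdot s$. Hence its dimension equals $\dim \mathcal{I}_1\cdot s$, and its tangent space at $s$ is spanned by the infinitesimal generators of $\mathfrak{g}=\mathfrak{gl}_m\oplus\mathfrak{gl}_n\oplus\mathfrak{gl}_p\oplus\mathbb{R}^{2r}$. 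By Corollary \ref{cor-dimos},
\begin{equation*}
\dim \mathcal{I}(m,n,p|r)\cdot s=\operatorname{rank}T(s)=(m^2+n^2+p^2+2r)-\dim\operatorname{Stab}_{\mathcal{I}_1}(s).
\end{equation*}
It therefore suffices to show $\dim\operatorname{Stab}_{\mathcal{I}_1}(s)\geq 3$.

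To get this bound, I would exhibit a three-dimensional subgroup of $\mathcal{I}_1$ that fixes every $s$. The first piece is $\mathbf{G}_0(m,n,p)$ embedded as $(\lambda I_m,\lambda I_n,\lambda I_p;1,\dots,1)$; by (\ref{eq-degroote}) it acts trivially. The other two pieces come from the overlap noted in Remark \ref{rem-overlap}. The element $(\lambda I_m, I_n,\mu^{-1}I_p)\in\mathbf{G}(m,n,p)$ sends each layer to $(\lambda U_t,\mu V_t,(\lambda\mu)^{-1}W_t)$, which is exactly the action of $(\lambda,\mu,\dots,\lambda,\mu)\in\mathbf{L}_0(2r)$. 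So the pair $\bigl((\lambda I_m,I_n,\mu^{-1}I_p),\,(\lambda^{-1},\mu^{-1},\dots,\lambda^{-1},\mu^{-1})\bigr)$ acts trivially on every $s$.

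At the Lie algebra level these give three kernel directions:
\begin{equation*}
(I_m,I_n,I_p;0),\qquad (I_m,0,0;-1,0,\dots,-1,0),\qquad (0,0,-I_p;0,-1,\dots,0,-1).
\end{equation*}
The first is the only one with a nonzero $\mathfrak{gl}_n$ component. The second and third have layer components supported on disjoint coordinates ($\lambda$-slots versus $\mu$-slots). So the three vectors are linearly independent, and the stabilizer has dimension at least $3$. I expect this to be the main point needing care: one must check that these directions are genuinely independent in $\mathfrak{g}$ and not merely distinct as subgroups. Once that holds, the upper bound $m^2+n^2+p^2+2r-3$ follows.

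For the second claim, suppose equality holds. Then $\dim\operatorname{Stab}_{\mathcal{I}_1}(s)=3$, so the identity component of the stabilizer is exactly the three-dimensional group above. That group consists only of $\mathbf{G}_0(m,n,p)$ together with the trivially acting copy of $\mathbf{L}_0(2r)$ (the de Groote/layer-scaling overlap). Because these last two directions are an artifact of the presentation of $\mathcal{I}_1$ and act trivially, the only genuine positive-dimensional stabilizer of $s$ is $\mathbf{G}_0(m,n,p)$. Equivalently, in the effective quotient group $\mathcal{I}_1/\mathbf{L}_0$ the stabilizer has dimension $1$, and it contains $\mathbf{G}_0$, which is one-dimensional, connected up to sign, and lies in every stabilizer. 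Therefore no additional positive-dimensional subgroup fixes $s$.
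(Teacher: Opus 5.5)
Your proof is correct and follows the paper's own dimension count: $\dim\mathcal{I}$, minus the two overlap directions of $\mathbf{L}_0(2r)$, minus the $\mathbf{G}_0(m,n,p)$ direction. You make that count more rigorous than the paper, which informally ``subtracts the overlapped part''; instead you place the antidiagonal copy of $\mathbf{L}_0(2r)$ inside the stabilizer of the product action of $\mathcal{I}_1$, check explicitly that the three Lie-algebra kernel vectors are linearly independent, and justify the passage from $\mathcal{I}$ to the finite-index subgroup $\mathcal{I}_1$.

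One wording slip: $\mathcal{I}_1/\mathbf{L}_0$ is not the \emph{effective} quotient, since $\mathbf{G}_0$ still acts trivially there. This does not affect the argument, which matches the paper's convention of counting $\mathbf{G}_0$ as the residual stabilizer.
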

\begin{proof}
By definition (\ref{eq-Inmpr}), we have
$$\dim \mathcal{I}(m,n,p|r)=m^2+n^2+p^2+2r.$$
From Remark \ref{rem-overlap}, there is an overlapped group $\mathbf{L}_0(2r)$ in $\mathbf{G}(m,n,p)$ and $\mathbf{L}(2r)$, which is two-dimensional. Then in the isotropy group orbit $\mathcal{I}(m,n,p|r)\cdot s$, the overlapped part should be considered as one.

After subtracting the overlapped two-dimensional part, by Corollary \ref{cor-dimos} we have 
\begin{align*}
  \dim \mathcal{I}(m,n,p|r)\cdot s &=\dim \mathcal{I}(m,n,p|r)-\dim \operatorname{Stab}_{\mathcal{I}}(s)-2.
\end{align*}

Since $\mathbf{G}_0(m,n,p)\subseteq \operatorname{Stab}_{\mathcal{I}}(s)$ is one-dimensional, we have 
$$\dim \operatorname{Stab}_{\mathcal{I}}(s)\geq1.$$
Thus, we have the upper bound
\begin{align*}
  \dim \mathcal{I}(m,n,p|r)\cdot s \leq m^2+n^2+p^2+2r-3.
\end{align*}

So if  the equality holds, then $s$ has no other positive-dimensional stabilizer.
\end{proof}

\subsubsection{Geometry of the isotropy group action in affine coordinate space}\label{subsec-geometry}
\

Given a solution $s\in \mathrm{V}(m,n,p|r)$ with corresponding vector form 
as in (\ref{eq-vecuvw}),  let $V\subseteq \mathrm{V}(m,n,p|r)$ be the irreducible variety that contains $s$.
For the four types of isotropy groups of $\mathrm{V}(m,n,p|r)$, if we look at their actions in $\mathbb{R}^{(mn+np+pm)r}$, then some remarks should be given.

First, let $\mathcal{I}_1(m,n,p|r)=\mathbf{G}(m,n,p)\times \mathbf{L}(2r)$
be the positive-dimensional Lie group defined in (\ref{eq-Imnpr1}). So  under the $\mathcal{I}_1(m,n,p|r)$-action, we obtain a (smooth) positive-dimensional group orbit $\mathcal{I}_1(m,n,p|r)\cdot s\subseteq V$. 
When $\dim V> \dim \mathcal{I}_1(m,n,p|r)\cdot s$, the smooth part of $V$ becomes a foliation whose leaves are the group orbits $\mathcal{I}_1(m,n,p|r)\cdot s$.

On the other hand, let
\begin{equation}\label{eq-I2}
\mathcal{I}_2(m,n,p|r)=S_r\times S(m,n,p),
\end{equation}
be the permutation summands and $S_3$-symmetry groups. Then $\mathcal{I}_2(m,n,p|r)$ is a discrete group. The group orbit $\mathcal{I}_2(m,n,p|r)\cdot s$ may not belong to $V$. The reason is that, under the action of $\mathcal{I}_2(m,n,p|r)$, we obtain a  symmetry copy of $V$ in $\mathbb{R}^{(mn+np+pm)r}$. So in this case, $\mathcal{I}_2(m,n,p|r)\cdot s$ may lie in another symmetry copy of $V$.

\subsection{The computation of tangent and nullspace basis matrices}
\

Let $s\in \mathrm{V}(m,n,p|r)$ be a solution. Recall the Lie group part of the isotropy group $$\mathcal{I}_1(m,n,p|r)=\mathbf{G}(m,n,p)\times\mathbf{L}(2r).$$ 
Let $T(s)$ be the tangent basis matrix of $s$ with respect to $\mathcal{I}_1(m,n,p|r)$, which is defined in (\ref{eq-Ts}). Let $N(s)$ be the nullspace basis matrix of $s$ defined in (\ref{eq-ns}).
In this section, we discuss how to compute them. 

\subsubsection{The computation of the tangent basis matrix}
\

First, we discuss the computation of $T(s)$. Since
$$\mathcal{I}_1(m,n,p|r)=\mathbf{G}(m,n,p)\times\mathbf{L}(2r)=\mathrm{GL}_m(\mathbb{R})\times \mathrm{GL}_n(\mathbb{R}) \times \mathrm{GL}_p(\mathbb{R})\times \mathbf{L}(2r),$$
the corresponding Lie algebra is
$$\mathfrak{g}_{\mathcal{I}_1}=\mathfrak{gl}_m(\mathbb{R})\oplus \mathfrak{gl}_n(\mathbb{R}) \oplus \mathfrak{gl}_p(\mathbb{R})\oplus \mathfrak{l}(2r).$$

It is well known that
$\mathfrak{gl}_m (\mathbb{R})=\mathbb{R}^{m\times m}$ and $\mathfrak{l}(2r)=\mathbb{R}^{2r}$.
For $i,j=1,2,\ldots m$, let $E_{i,j}^{(m)}\in \mathbb{R}^{m\times m}$ denote the matrix with 1 in the $i$-th row and $j$-th column, and other entries zero.  Then $\{E_{i,j}^{(m)}|i,j=1,...,m\}$ is a standard basis of $\mathfrak{gl}_m(\mathbb{R})$. Similarly, we define the standard bases 
$\{E_{i,j}^{(n)} \}$ and $\{E_{i,j}^{(p)} \}$ of  
$\mathfrak{gl}_n(\mathbb{R})$ and $\mathfrak{gl}_p(\mathbb{R})$, respectively.
For $1\leq i\leq 2r$, let $E_{i}\in \mathbb{R}^{2r}$ denote the vector with 1 in the $i$th row and zeros elsewhere. Then $\{E_i|i=1,2,\ldots,2r\}$ is a standard basis of the abelian Lie algebra $\mathfrak{l}(2r)=\mathbb{R}^{2r}$.

Under the canonical embeddings of the summands into the direct sum, we have that the linear span of 
$$\{E_{i,j}^{(m)}\}_{i,j=1}^m\cup\{E_{i,j}^{(n)}\}_{i,j=1}^n\cup
\{E_{i,j}^{(p)}\}_{i,j=1}^p\cup\{E_i\}_{i=1}^{2r}$$
is equal to $\mathfrak{g}_{\mathcal{I}_1}$.

To compute $T(s)$, we first derive the infinitesimal generators in
the matrix form of $s$ defined in \eqref{eq-suvw}. We then express
them in vector form and use the resulting vectors as the columns of
$T(s)$.

For $(A,B,C)\in \mathbf{G}(m,n,p)$, by (\ref{eq-degroote}) we have
\begin{equation}\label{eq-tabc}
T(A,B,C)=T(A,I_n,I_p)T(I_m,B,I_p) T(I_m,I_n,C).
\end{equation}
Let $(U,V,W)$ be an arbitrary layer of $s$, where $U\in \mathbb{R}^{m\times n}$, $V\in \mathbb{R}^{n\times p}$ and $W\in \mathbb{R}^{p\times m}$.
For $\lambda,\mu\in \mathbb{R}^{\times}$ define \emph{the $\lambda$-action} by 
\begin{equation}\label{eq-lamb}
(\lambda,1)(U, V, W)=(\lambda U, V, \lambda^{-1}W)
\end{equation}
and \emph{the $\mu$-action} by
\begin{equation}\label{eq-mu}
(1,\mu)(U, V, W)=( U, \mu V, \mu^{-1}W).
\end{equation}
Then the layer scaling action on a single layer $(U, V, W)$ is given by
$$(\lambda,\mu)(U, V, W)=(\lambda,1)(1,\mu)(U, V, W)=(\lambda U, \mu V, (\lambda\mu)^{-1}W).$$
Thus, from (\ref{eq-tabc})-(\ref{eq-mu}) there are five types of one-parameter subgroup actions on $(U,V,W)$. 

In the following, we first compute the infinitesimal generators on
a single layer $(U,V,W)$ and then apply the resulting formulas
componentwise to all $r$ layers of $s$.

\textbf{(1):} For $A \in \mathfrak{gl}_m(\mathbb{R})$ and $\mathcal{A}(t)=\exp(tA)$ where $t\in \mathbb{R}$, we have 
\begin{equation*}
A\cdot U =\left.\frac{d}{dt}\exp(tA)U\right|_{t=0}=AU,
\end{equation*}
that is, $A\cdot U$ is the multiplication of $A$ and $U$.
Similarly, we have
$$\left.\frac{d}{dt}W\exp(-tA)\right|_{t=0}=-WA.$$
Then, for the one-parameter subgroup action defined by
\begin{equation*}
\mathcal{A}(t) (U,V,W) = (\mathcal{A}(t)U, V, W\mathcal{A}(-t))=\left(\exp(t A)U,V,W\exp(-t A)\right),
\end{equation*}
we have the infinitesimal generator
\begin{equation}\label{eq-infslocauvw}
A\cdot(U,V,W)=\left.\frac{d}{dt}\mathcal{A}(t)(U,V,W)\right|_{t=0}=(AU, 0,
-WA).
\end{equation}

Thus, for $\mathcal{A}(t)=\exp(tA)$ and the one-parameter subgroup action
$$\mathcal{A}(t)s=
(\exp(tA)U_1,V_1,W_1\exp(-tA),\ldots,\exp(tA)U_r,V_r,W_r\exp(-tA)),
$$
from (\ref{eq-infslocauvw}) the corresponding infinitesimal generator is given by
\begin{equation}\label{eq-AS}
A\cdot s=(AU_1, 0 ,-W_1A,\ldots, AU_r, 0, -W_r A).
\end{equation}

\textbf{(2):} For $B \in \mathfrak{gl}_n(\mathbb{R})$, let $\mathcal{B}(t)=\exp(tB)$ where $t\in \mathbb{R}$.
Then for the one-parameter subgroup action
$$\mathcal{B}(t)s=
(U_1\exp(-tB),\exp(tB)V_1,W_1,\ldots,U_r\exp(-tB),\exp(tB)V_r,W_r),
$$
the corresponding infinitesimal generator is given by
\begin{equation}\label{eq-BS}
B\cdot s=
(-U_1 B, B V_1, 0,\ldots, -U_r B, B V_r, 0).
\end{equation}

\textbf{(3):} For  $C \in \mathfrak{gl}_p(\mathbb{R})$, let $\mathscr{C}(t)=\exp(tC)$ where $t\in \mathbb{R}$.
Then considering the one-parameter subgroup action
$$\mathscr{C}(t)s=
(U_1,V_1\exp(-tC),\exp(tC)W_1,\ldots,U_r,V_r\exp(-tC),\exp(tC)W_r),
$$
the corresponding infinitesimal generator is given by
\begin{equation}\label{eq-CS}
C\cdot s=
(0, -V_1 C, C W_1,\ldots, 0, -V_r C, C W_r).
\end{equation}

\textbf{(4):} Recall that $\{E_i|i=1,2,\ldots,2r\}$ is the standard basis of the Lie algebra $\mathfrak{l}(2r)=\mathbb{R}^{2r}$. 
For the $\lambda$-action defined in (\ref{eq-lamb}), define the one-parameter subgroup action
\begin{equation*}
E_1(t) (U,V,W)
=\left(\exp(t)U, V , \exp(-t)W \right).
\end{equation*}
Then we have the infinitesimal generator
\begin{equation}\label{eq-lay1loc}
\left.\frac{d}{dt} E_1(t)(U,V,W)\right|_{t=0}=( U, 0, - W)
\end{equation}
For $i=2k-1$ where $k=1,2,\ldots,r$,
define the one-parameter subgroup action as
$$E_1^{(i)}(t) s=(U_1,\cdots, V_{k-1}, W_{k-1}, \exp(t) U_k, V_k, 
\exp(-t)W_k,  U_{k+1}, \cdots,W_r)$$
Then for $i=2k-1$ from (\ref{eq-lay1loc}), the infinitesimal generator $E_i\cdot s$ is given by
\begin{align}
E_i \cdot s&=\left.\frac{d}{dt} E_1^{(i)}(t) s \right|_{t=0}\label{eq-ei2k-1}\\
&=\left.\frac{d}{dt}(U_1,\cdots, V_{k-1}, W_{k-1}, \exp(t) U_k, V_k, \exp(-t)W_k,  U_{k+1}, \cdots, W_r)\right|_{t=0}\notag\\
&=(0,\cdots, 0,  U_k, 0, -W_k, 0,\cdots,0).\notag
\end{align}

\textbf{(5):} For the $\mu$-action defined in (\ref{eq-mu}), define the one-parameter subgroup action as
\begin{equation*}
E_2(t) (U,V,W)
=\left(U, \exp(t)V , \exp(- t)W \right).
\end{equation*}
Then we have
\begin{equation}\label{eq-lay2loc}
\left.\frac{d}{dt} E_2(t)(U,V,W)\right|_{t=0}=(0,  V, -W).
\end{equation}
For $i=2k$ where $k=1,2,\ldots,r$,
define the one-parameter subgroup action as
$$E_2^{(i)}(t) s=(U_1,\cdots, V_{k-1}, W_{k-1}, U_k, \exp(t)V_k, 
\exp(-t)W_k,  U_{k+1}, V_{k+1}, \cdots, W_r)$$
Then for $i=2k$ from (\ref{eq-lay2loc}) we have
\begin{align}
E_i \cdot s&=\left.\frac{d}{dt} E_2^{(i)}(t) s \right|_{t=0}\label{eq-ei2k}\\
&=\left.\frac{d}{dt}(U_{1},\cdots, V_{k-1}, W_{k-1}, U_k, \exp(t)V_k, \exp(-t)W_k,  U_{k+1}, V_{k+1}, \cdots, W_{r})\right|_{t=0}\notag\\
&=(0,\cdots, 0,  0 , V_k, -W_k,  0,\cdots,0).\notag
\end{align}

Now in (\ref{eq-AS}), we let $A=E^{(m)}_{i,j}$ and denote
\begin{equation}\label{eq-aijs}
\mathbf{a}_{i,j}=\mathrm{Vec}(E^{(m)}_{i,j}\cdot s),
\end{equation}
where $i,j\in [m]$ and ``\textrm{Vec}'' is the vectorization operation defined in (\ref{eq-vecuvw}). Then we can see that $\mathbf{a}_{i,j}$ is a vector in 
$\mathbb{R}^{(mn+np+pm)r}$.
Similarly, in (\ref{eq-BS}) and (\ref{eq-CS}), we let $B=E^{(n)}_{i,j}$ and 
$C=E^{(p)}_{i,j}$, respectively. Then
denote
\begin{equation}\label{eq-bijs}
\mathbf{b}_{i,j}=\mathrm{Vec}(E^{(n)}_{i,j}\cdot s),\quad \text{where}~ i,j\in [n]
\end{equation}
and
\begin{equation}\label{eq-cijs}
\mathbf{c}_{i,j}=\mathrm{Vec}(E^{(p)}_{i,j}\cdot s), \quad \text{where}~  i,j\in [p].
\end{equation}

When $i=2k-1$ and $1\leq k\leq r$, for $E_{i}\cdot s$ defined in (\ref{eq-ei2k-1})  we denote
\begin{equation}\label{eq-ell1i}
\ell_1^{(k)}=\mathrm{Vec}(E_{i}\cdot s).
\end{equation}
When $i=2k$ and $1\leq k\leq r$, for
$E_{i}\cdot s$  defined in  (\ref{eq-ei2k}) we denote
\begin{equation}\label{eq-ell2i}
\ell_2^{(k)}=\mathrm{Vec}(E_{i}\cdot s).
\end{equation}

Then from (\ref{eq-aijs})-(\ref{eq-ell2i}) and Definition \ref{def-tsm},
we have the following theorem which tells us how to compute $T(s)$.
\begin{theorem}\label{thm-tbm}
The tangent basis matrix of $s\in \mathrm{V}(m,n,p|r)$ is given by
\begin{equation}\label{eq-tsmnpr}
T(s)=\left(\mathbf{a}_{1,1},\mathbf{a}_{1,2},\cdots \mathbf{a}_{m,m}, 
\mathbf{b}_{1,1},\mathbf{b}_{1,2},\cdots \mathbf{b}_{n,n},
\mathbf{c}_{1,1},\cdots \mathbf{c}_{p,p},
\ell_1^{(1)},\ell_2^{(1)}, \cdots, \ell_1^{(r)},\ell_2^{(r)} 
\right),
\end{equation}
which belongs to $\mathbb{R}^{(mn+np+pm)r\times (m^2+n^2+p^2+2r)}$.
\end{theorem}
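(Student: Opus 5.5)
This follows from Definition \ref{def-tsm} once three facts are in place. The columns must span the infinitesimal generators of the whole Lie algebra $\mathfrak{g}_{\mathcal{I}_1}$. Each listed column must equal $\mathrm{Vec}(E\cdot s)$ for the corresponding basis element $E$. The generator map $A\mapsto A\cdot s$ must be linear, so that a basis of $\mathfrak{g}_{\mathcal{I}_1}$ yields a spanning set of $\mathfrak{g}_{\mathcal{I}_1}\cdot s=T_s(\mathcal{I}_1 s)$, as in Proposition \ref{prop-tdim}.

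\textbf{Step 1: a basis of the Lie algebra.} I would take the set
$$\{E^{(m)}_{i,j}\}\cup\{E^{(n)}_{i,j}\}\cup\{E^{(p)}_{i,j}\}\cup\{E_i\}_{i=1}^{2r},$$
embedded canonically in the direct sum $\mathfrak{gl}_m\oplus\mathfrak{gl}_n\oplus\mathfrak{gl}_p\oplus\mathfrak{l}(2r)$. Its span is $\mathfrak{g}_{\mathcal{I}_1}$, as already noted before the theorem. It has $m^2+n^2+p^2+2r$ elements, which matches the claimed number of columns.

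\textbf{Step 2: each generator is computed correctly.} For a direct-product group, the one-parameter subgroup $\exp(tE)$ of a basis element $E$ lying in one summand acts as the identity on the other factors. The action of $\mathcal{I}_1$ factors as in (\ref{eq-tabc}) together with the layer scalings. Hence $E\cdot s$ is exactly the derivative already computed:
\begin{itemize}
\item (\ref{eq-AS}) for $E\in\mathfrak{gl}_m$;
\item (\ref{eq-BS}) for $E\in\mathfrak{gl}_n$;
\item (\ref{eq-CS}) for $E\in\mathfrak{gl}_p$;
\item (\ref{eq-ei2k-1}) and (\ref{eq-ei2k}) for the layer-scaling directions, where the exponentials of $E_{2k-1}$ and $E_{2k}$ in $\mathbf{L}(2r)$ are $\lambda_k=e^t$ and $\mu_k=e^t$ respectively, with all other parameters equal to $1$.
\end{itemize}
Vectorizing by (\ref{eq-vecuvw}) gives the columns $\mathbf{a}_{i,j},\mathbf{b}_{i,j},\mathbf{c}_{i,j},\ell_1^{(k)},\ell_2^{(k)}$, using the definitions (\ref{eq-aijs})–(\ref{eq-ell2i}).

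\textbf{Step 3: linearity of the generator map.} The action is linear on the ambient space $\mathbb{R}^{(mn+np+pm)r}$. The map $A\mapsto \frac{d}{dt}\exp(tA)s|_{t=0}$ is the differential at the identity of the orbit map $g\mapsto gs$. It is therefore linear in $A$, and this can also be read off directly from the formulas, since $(AU,0,-WA)$ and the others are linear in $A$. Finally, $\mathrm{Vec}$ is linear. Together these give $\mathrm{span}\{\text{columns}\}=\mathrm{Vec}(\mathfrak{g}_{\mathcal{I}_1}\cdot s)$, which is the tangent basis matrix of Definition \ref{def-tsm} with the stated size.

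The only delicate point is Step 2 for the layer-scaling part, because the $W_k$-component scales by $(\lambda_k\mu_k)^{-1}$. I would check that differentiating $(e^{t}U_k,V_k,e^{-t}W_k)$ and $(U_k,e^tV_k,e^{-t}W_k)$ gives exactly the stated vectors. I would also check that the product structure lets each basis direction be treated separately. Note that the columns need not be linearly independent (for example, $\mathbf{G}_0$ gives a dependency), but Definition \ref{def-tsm} only requires that they span $\mathfrak{g}$, so this is no obstruction.
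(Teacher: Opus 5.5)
Your proposal is correct and follows the paper's own route. You take the standard basis of $\mathfrak{g}_{\mathcal{I}_1}$, compute each generator from the one-parameter subgroups as in (\ref{eq-AS})--(\ref{eq-ei2k}), vectorize, and stack the results as columns per Definition \ref{def-tsm}. Your extra linearity check in Step 3 is exactly what Proposition \ref{prop-tdim} supplies; the paper cites that proposition rather than re-deriving it.
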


By Corollary \ref{cor-dimos} and Proposition \ref{prop-dimIs}, we have the following corollary. It tells us that after computing the rank of $T(s)$ we can see if $s$ has other positive-dimensional stabilizer or not, which partially answers a question in the concluding remarks of \cite{li25loc}.
\begin{corollary}\label{prop-ranktsmnp}
For the tangent basis matrix $T(s)$ in (\ref{eq-tsmnpr}), we have
$$\operatorname{rank}T(s)\leq m^2+n^2+p^2+2r-3.$$
If $s$ has no other positive-dimensional stabilizer except $\mathbf{G}_0(m,n,p)$ in (\ref{eq-g0mnp}), then we have
$$\operatorname{rank}T(s)=m^2+n^2+p^2+2r-3.$$
\end{corollary}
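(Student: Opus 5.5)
The plan is to read $\operatorname{rank}T(s)$ as the rank of the linear map
$$\phi_s\colon \mathfrak{g}_{\mathcal{I}_1}\to\mathbb{R}^{(mn+np+pm)r},\qquad X\mapsto \mathrm{Vec}(X\cdot s),$$
whose matrix in the standard basis of Theorem \ref{thm-tbm} is exactly $T(s)$. By rank--nullity,
$$\operatorname{rank}T(s)=m^2+n^2+p^2+2r-\dim\ker\phi_s .$$
So the inequality amounts to exhibiting a three-dimensional subspace of $\ker\phi_s$. The equality case amounts to showing that, under the hypothesis, $\ker\phi_s$ is no larger.

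\textbf{Three explicit relations.} From \eqref{eq-AS}--\eqref{eq-ei2k}, the identity matrices act layerwise as follows:
$$I_m\cdot s=(U_k,0,-W_k)_k,\qquad I_n\cdot s=(-U_k,V_k,0)_k,\qquad I_p\cdot s=(0,-V_k,W_k)_k .$$
Comparing with $\ell_1^{(k)}$ and $\ell_2^{(k)}$ gives three linear relations among the columns of $T(s)$:
$$\sum_i\mathbf a_{i,i}-\sum_k\ell_1^{(k)}=0,\qquad \sum_i\mathbf b_{i,i}+\sum_k\ell_1^{(k)}-\sum_k\ell_2^{(k)}=0,\qquad \sum_i\mathbf c_{i,i}+\sum_k\ell_2^{(k)}=0.$$
The corresponding coefficient vectors are linearly independent. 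Indeed, the first has nonzero coefficients on the $\mathbf a$-columns only among the $\mathfrak{gl}$ blocks, the second on the $\mathbf b$-columns only, and the third on the $\mathbf c$-columns only. Hence $\dim\ker\phi_s\ge 3$, and so $\operatorname{rank}T(s)\le m^2+n^2+p^2+2r-3$.

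These three directions are the Lie algebra of the torus $Z$ of elements of $\mathcal{I}_1(m,n,p|r)$ acting trivially on every solution. One direction is $\mathbf G_0(m,n,p)$ (the sum of the three relations gives $I_m+I_n+I_p$). The other two come from the overlap $\mathbf L_0(2r)$ of Remark \ref{rem-overlap}, paired with the inverse layer scaling. This matches the count $-1-2$ in the proof of Proposition \ref{prop-dimIs}.

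\textbf{Equality case.} By Proposition \ref{prop-tdim} and Corollary \ref{cor-dimos}, $\operatorname{rank}T(s)=\dim \mathcal{I}_1\cdot s$, and $\ker\phi_s$ is the Lie algebra of $\operatorname{Stab}_{\mathcal{I}_1}(s)$, so $\dim\ker\phi_s=\dim\operatorname{Stab}_{\mathcal{I}_1}(s)$. The hypothesis that $s$ has no positive-dimensional stabilizer other than $\mathbf G_0(m,n,p)$ will be interpreted, as in Proposition \ref{prop-dimIs}, modulo the overlap $\mathbf L_0(2r)$ that is always present. Under this reading, the identity component of $\operatorname{Stab}_{\mathcal{I}_1}(s)$ equals that of $Z$, so $\dim\ker\phi_s=3$ and $\operatorname{rank}T(s)=m^2+n^2+p^2+2r-3$. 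Equivalently, the orbit dimension attains the bound of Proposition \ref{prop-dimIs}, and we invoke that proposition together with Corollary \ref{cor-dimos}.

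\textbf{Main obstacle.} The delicate step is the equality case, specifically pinning down precisely what ``no other positive-dimensional stabilizer'' means. One must make explicit that the two overlap directions coming from $\mathbf L_0(2r)$ are part of the trivially acting subgroup, and not extra stabilizer. I would first state that the trivially acting subgroup $Z$ has Lie algebra spanned exactly by the three relation vectors above. Then the hypothesis reads $\ker\phi_s=\operatorname{Lie}(Z)$, and the rank formula follows immediately from rank--nullity.
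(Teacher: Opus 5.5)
Your proposal is correct. Its overall structure is the paper's, but the route to the inequality is more explicit.

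\textbf{What matches the paper.} The paper proves the corollary by citing Corollary \ref{cor-dimos} and Proposition \ref{prop-dimIs}. Corollary \ref{cor-dimos} says $\operatorname{rank}T(s)$ equals $\dim\mathfrak{g}_{\mathcal{I}_1}$ minus the stabilizer dimension, which is exactly your rank--nullity for $\phi_s$. Proposition \ref{prop-dimIs} gets the bound by a group-level count: subtract the two-dimensional overlap $\mathbf{L}_0(2r)$, ``counted once'', and the one-dimensional $\mathbf{G}_0(m,n,p)$ in the stabilizer.

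\textbf{Where you differ.} For the inequality you replace that count with three explicit, visibly independent linear relations among the columns $\mathbf{a}_{i,i}$, $\mathbf{b}_{i,i}$, $\mathbf{c}_{i,i}$, $\ell_1^{(k)}$, $\ell_2^{(k)}$. These relations are correct, as one checks from \eqref{eq-AS}--\eqref{eq-ei2k}. You then identify their span as the Lie algebra of the subgroup $Z\subseteq\mathcal{I}_1(m,n,p|r)$ that acts trivially. Concretely, $Z$ consists of the elements $(aI_m,bI_n,cI_p)$ with $\lambda_k=b/a$ and $\mu_k=c/b$ for all $k$. So $Z$ is exactly three-dimensional when defined as the kernel of the action on $\mathbb{R}^{(mn+np+pm)r}$, which settles the one fact you deferred.

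\textbf{What each approach buys.} Your version supplies rigor that the paper leaves implicit. The formula $\dim\mathcal{I}\cdot s=\dim\mathcal{I}-\dim\operatorname{Stab}_{\mathcal{I}}(s)-2$ in Proposition \ref{prop-dimIs} is consistent only if the stabilizer is read modulo the overlap. Literally, the stabilizer in $\mathcal{I}_1$ always contains all of $Z$, not just $\mathbf{G}_0$, so the hypothesis of the equality case would never hold. You show that the always-present kernel of $T(s)$ is precisely $\operatorname{Lie}(Z)$. The equality case then reduces, as you say, to reading the hypothesis as $\ker\phi_s=\operatorname{Lie}(Z)$, which is the same interpretation the paper uses implicitly. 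The paper's route is shorter and is phrased directly in terms of orbit dimension, which is the form used later in \eqref{eq-tstmnp}. Yours is self-contained, checkable column by column, and gives the cleaner justification of the overlap step on which both parts of the statement rest.
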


\subsubsection{The computation of the nullspace basis matrix}
\

The computation of $N(s)$ depends on the Jacobian matrix $J(s)$. In fact,
from (\ref{eq-ns}), we know that
$$N(s)=(v_1,v_2,\ldots,v_d),$$
where $\{v_i\}_{i=1}^d$ is a basis of the nullspace.
Once $J(s)$ is known, $N(s)$ can be obtained by solving the linear equations $J(s)x=0$ and choosing a (typical) basis. In Julia, once $J(s)$ is given, we can obtain $N(s)$ from the package "Nemo.jl",  conveniently.
So we just discuss the computation of $J(s)$.

Suppose that 
$$s=(U_1,V_1,W_1,\ldots,U_r,V_r,W_r)\in \mathrm{V}(m,n,p|r)$$
is a solution. For $i=1,2,\ldots,r$, let $\mathbf{u}_i=\mathrm{Vec}(U_i)$, $\mathbf{v}_i=\mathrm{Vec}(V_i)$ and $\mathbf{w}_i=\mathrm{Vec}(W_i)$.
Just as the discussion in \cite[Sect. 2.5]{CV25}, if we take partial derivatives with respect to the coordinate order as in
(\ref{eq-vecuvw}), then the corresponding Jacobian matrix is given by
\begin{align}
J(s)=&(I_{mn}\otimes \mathbf{v}_1\otimes \mathbf{w}_1, \mathbf{u}_1\otimes I_{np}\otimes \mathbf{w}_1, 
\mathbf{u}_1\otimes \mathbf{v}_1\otimes I_{pm}, I_{mn}\otimes \mathbf{v}_2\otimes \mathbf{w}_2,\label{eq-js}\\
   & \mathbf{u}_2\otimes I_{np}\otimes \mathbf{w}_2, \mathbf{u}_2\otimes \mathbf{v}_2\otimes I_{pm},\cdots\cdots\cdots\cdots,I_{mn}\otimes \mathbf{v}_r\otimes \mathbf{w}_r,\notag\\
   & \mathbf{u}_r\otimes I_{np}\otimes \mathbf{w}_r, 
\mathbf{u}_r\otimes \mathbf{v}_r\otimes I_{pm}),\notag
\end{align}
where $I_{mn}\in \mathbb{R}^{mn\times mn}$ is the identity matrix, and similarly for
$I_{np}$ and $I_{pm}$. Thus, by definition above, we have 
$$J(s)\in \mathbb{R}^{(mnp)^2\times(mn+np+pm)r}.$$

\begin{remark}
For $J(s)$ defined above and $T(s)$ in (\ref{eq-tsmnpr}), by Proposition
\ref{prop-jas}, we have 
$$J(s)T(s)=0,$$
which is the zero matrix in $\mathbb{R}^{(mnp)^2\times(m^2+n^2+p^2+2r)}$.
\end{remark}

\section{The numerical experiments and results}\label{sec-numtest}

In this section, we implement the method proposed in Section \ref{subsec-ntmethod} for Brent equations. Numerical experiment results are given. Starting from a solution, our method can efficiently produce parameterized solution sets. We first summarize the procedure used in this paper.

\subsection{A practical substitution procedure for the Brent Equations}\label{subs-subsprocedure}
\

The procedure of our numerical tests is summarized in steps as follows. It is designed for smooth solutions. Numerical tests on singular solutions are given in Section \ref{subs-testsing}.

\begin{description}
\item[Step 1] Perform a deflation test for the given solution $s\in \mathrm{V}(m,n,p|r)$ to determine if it is smooth. 
    For example, we can use the algorithm in \cite{li25def}. The deflation sequences give an estimation of the local dimensions, which implies whether the solution is smooth. If the first 4 numbers of the deflation sequence are all equal, then we assume that $s$ is smooth.

\item[Step 2] After the deflation test, if $s$ is smooth, then 
compute the Jacobian matrix $J(s)$ defined in (\ref{eq-js}), and compute the corresponding nullspace basis matrix $N(s)$. $N(s)$ can be computed in Julia by the package "Nemo.jl". Then compute the tangent basis matrix $T(s)$ defined in Theorem \ref{thm-tbm}.

\item[Step 3] Compute $\operatorname{rank}N(s)=d$ and $\operatorname{rank}T(s)=d_T$, and determine whether
$d>d_T$. If $d>d_T$, then in the neighbourhood of $s$, the solution component can be a foliation whose leaves are the group orbits.
In this case, we can find new solutions that belong to different group orbits by fixing partial solution and making the substitution.

If $d=d_T$, the solution component containing $s$ lies in an 
$\mathcal{I}(m,n,p|r)$-orbit. Hence, the solutions obtained by fixing partial solution and making the substitution may still belong to the same $\mathcal{I}(m,n,p|r)$-orbit.

\item[Step 4]
Establish a one-to-one correspondence among the column indices of $J(s)$, the row indices of $N(s)$, the row indices of $T(s)$ and the variables. In this paper, we use the correspondence between (\ref{eq-coordbrentvar}) and (\ref{eq-vecuvw}).

\item[Step 5] 
Suppose that $d>d_T$ and $k=(mn+np+pm)r$. 
Choose a subset $I\subseteq[k]$, and form the submatrices
$N_I(s)$ and $T_I(s)$. Then compute $\operatorname{rank}N_I(s)=d_{N_I}$ and $\operatorname{rank}T_I(s)=d_{T_I}$, and search for
an index set $I$ satisfying $d-d_{N_I}>d_T-d_{T_I}$ as in (\ref{eq-ntok}). Depending on the computational complexity, our searching should keep $|I|$ large enough. This step we use the method proposed in Section \ref{subs-findI}.

\item[Step 6]
After finding the index set $I$ such that $d-d_{N_I}>d_T-d_{T_I}$, we obtain a partial solution $s_I$ for fixing and substitution. Then from the original Brent equations (\ref{eq-brent}), we get the reduced Brent equations by the constraint $x_i-s_i=0$ for $i\in I$ as in (\ref{eq-FI}). Then substituting $x_i=s_i$, it becomes a small scale polynomial system.

\item[Step 7]
After substituting, we can try to solve the reduced Brent equations by the symbolic computation software that can compute the Gröbner bases, such as Oscar.jl. If we want to find the solution efficiently, the computation of the Gröbner basis is suggested to be done under the lexicographic order.

\item[Step 8]
If the solution set obtained by computing Gröbner basis is positive-dimensional, then we often get a parameterized solution set. After that, we can check if the parameterized solution set contains new solutions in different isotropy group orbits, for example, by methods in \cite{berger22,Heule21,kaumoo22,Tichav-21}.
\end{description}

\begin{remark}
In \textbf{Step} 3 above, there are solutions of $\mathrm{V}(m,n,p|r)$ such that $d=d_T$. The first one is Strassen's solution in $\mathrm{V}(2,2,2|7)$ \cite{deGr78-2,Stra-69}, which is well-known. Another one is Smirnov's solution in $\mathrm{V}(3,3,6|40)$. In fact,
the deflation sequence found in Section 5.4 of \cite{li25def} provides numerical evidence that the local dimension of Smirnov's solution is 131, which is equal to $m^2+n^2+p^2+2r-3$, where $m=n=3$, $p=6$ and $r=40$. After computing the corresponding $T(s)$, we also found that $\operatorname{rank}T(s)=131$. 

From Corollary \ref{cor-subfull}, we have that computation of the ranks  in \textbf{Step} 3 and 5 is independent of the choice of the bases. 
\end{remark}

In \textbf{Step} 8, if the parameterized solution set is obtained, then we want to know if there are new solutions in it. One method is by checking the possible nontrivial $\mathcal{I}(m,n,p|r)$-actions, such as the discussions in \cite[Sect.3]{JohnMc86}. However, as pointed out in \cite{JohnMc86}, the process is tedious. When we just want to know if there are new $\mathcal{I}(m,n,p|r)$-orbits in the parameterized solution set, there are some relatively simple conditions which can help us. In the following, we give some discussions. 

Given a solution $s\in \mathrm{V}(m,n,p|r)$, let $s_I$ be the partial solution obtained in \textbf{Step} 6 for some index set $I$.
By the substitution procedure, suppose that we obtain a one-dimensional solution set that is denoted by  $s(t)$, where $t\in \mathbb{R}$. Suppose that $s(t)$ is smooth except at finitely many singular points. Let $J\subseteq \mathbb{R}$ be an open interval on which $s(t)$ is smooth. 
Let $T(s(t))$ denote the tangent basis matrix associated with $s(t)$, that is defined in (\ref{eq-tsmnpr}). Since the partial solution is fixed, we have $s_I(t)=s_I$.
Suppose that 
\begin{equation}\label{eq-tstmnp}
\operatorname{rank}T(s(t))=\operatorname{rank}T(s)=m^2+n^2+p^2+2r-3
\end{equation}
for all $t\in J$.
Let $s'(t)$ be the derivative of $s(t)$. Adding one column on the right of $T(s(t))$ we obtain
$$\left(T(s(t)),s'(t)\right).$$
Suppose that
\begin{equation}\label{eq-tsst1}
\operatorname{rank}\left(T(s(t)),s'(t)\right)=\operatorname{rank}T(s(t))+1=m^2+n^2+p^2+2r-2,
\end{equation}
for all $t\in J$.    
Then $s(J)$ meets infinitely many distinct $\mathcal{I}(m,n,p|r)$-orbits. 

In fact, let $\mathcal{I}_1(m,n,p|r)$ denote the de Groote and layer scaling groups defined in (\ref{eq-Imnpr1}). Let
\begin{equation*}
\mathcal{I}_1(m,n,p|r)\cdot s(J)\subseteq V(m,n,p|r)
\end{equation*}
denote \emph{a saturated subset} \cite{lee12smooth}, i.e., the union of  $\mathcal{I}_1(m,n,p|r)$-orbits through points of $s(J)$. Let $q=m^2+n^2+p^2+2r-3$. By (\ref{eq-tstmnp}) each $\mathcal{I}_1(m,n,p|r)$-orbit is of dimension $q$. From (\ref{eq-tsst1}), the dimension of the immersed submanifold $\mathcal{I}_1(m,n,p|r)\cdot s(J)$ is $q+1$. Thus $s(J)$ meets infinitely many distinct $\mathcal{I}_1(m,n,p|r)$-orbits. Let $\mathcal{I}_2(m,n,p|r)$ denote the permutation summands and $S_3$-symmetry groups defined in (\ref{eq-I2}), which is a finite discrete group. The full isotropy group $\mathcal{I}(m,n,p|r)$ is a finite extension of $\mathcal{I}_1(m,n,p|r)$, since the additional actions by $\mathcal{I}_2(m,n,p|r)$ are finite. Hence each $\mathcal{I}(m,n,p|r)$-orbit is a finite union of $\mathcal{I}_1(m,n,p|r)$-orbits. Since $s(J)$ meets infinitely many distinct $\mathcal{I}_1(m,n,p|r)$-orbits, it also meets infinitely many distinct $\mathcal{I}(m,n,p|r)$-orbits. 

\begin{remark}\label{remk-invf}
There is another way to determine if there are new $\mathcal{I}(m,n,p|r)$-orbits, which is realized by constructing \emph{an invariant function} \cite{olver93li,olver99c} on the parameterized solution set. Given a one-dimensional solution $s(t)\subseteq \mathrm{V}(m,n,p|r)$,
if a nonconstant invariant function $\varphi(s(t))$ is constructed, then 
$s(t_1)$ and $s(t_2)$ belong to distinct $\mathcal{I}(m,n,p|r)$-orbits when
$\varphi(s(t_1))\neq\varphi(s(t_2))$ where $t_1$, $t_2\in \mathbb{R}$ (see e.g., \cite[Example 4.20]{olver99c}). The construction of the invariant functions for parameterized solution sets is flexible. The general theory can be found in \cite{Hubert07sa,Kogan23inv}. We will construct an invariant function in Section \ref{subs-testdps}.
\end{remark}

\subsection{The numerical results}

\subsubsection{The test on Dumas, Pernet and Sedoglavic's solution in $\mathrm{V}(4,4,4|48)$} \label{subs-testdps}
\ 

In this section, the numerical tests are based on the solution obtained by Dumas, Pernet and Sedoglavic, which is given in Appendix B of \cite{dumas25}. It is transformed from AlphaEvolve's solution \cite{novikov25ae} by a (complex) $\mathcal{I}(4,4,4|48)$-action.  

With out of confusion, let $s$ denote this solution. Recently, another rational solution is also found \cite{moran26}. They belong to the same $\mathcal{I}(4,4,4|48)$-orbit.
Other solutions in $\mathrm{V}(4,4,4|48)$ are reported in \cite{kaporin24-cmmp,kaporin24-Dokl}.

When $m=n=p=4$ and $r=48$, the Brent equations $B(4,4,4|48)$ has 
$2304$ variables and $4096$ equations.
After performing a deflation test as in \cite{li25def}, we find that the first 4 numbers of deflation sequence of the solution are
$$(151,151,151,151).$$
The constant deflation sequence provides numerical evidence that
the solution is smooth with local dimension 151. After computing the corresponding Jacobian matrix $J(s)$ defined in (\ref{eq-js}), using the Julia package "Nemo.jl" we obtain the nullspace basis matrix $N(s)$, whose rank is $151$. Computing the tangent basis matrix $T(s)$ defined in (\ref{eq-tsmnpr}), we find that
$\operatorname{rank}T(s)=141$. Then we can search an index set $I$ with $|I|=2082$, which gives a rank gap
\begin{equation}\label{eq-gap1}
1=\operatorname{rank}N(s)-\operatorname{rank}N_I(s)>\operatorname{rank}T(s)-\operatorname{rank}T_I(s)=0.
\end{equation}

Let $s_I$ be the corresponding partial solution. Then after substituting $s_I$, we obtain a reduced polynomial system, the number of its variables becomes 2304-2082=222. In the reduced polynomial system, some equations differ only by nonzero scalar multiples. So we can just choose one such equations. After doing this, the final reduced polynomial system becomes 
smaller, many of its equations are quadratic. Then
under the lexicographic order, its Gröbner basis can be easily computed by Oscar.jl \cite{Dec25} in a few seconds. Then we find that the solution set is a one-dimensional curve with rational coefficients, which is given in Appendix \ref{sec-app}. It can be easily checked that the parameterized solution $s(t)$ satisfies (\ref{eq-tstmnp}) and (\ref{eq-tsst1}). So 
$s(t)$ contains infinitely many pairwise inequivalent points. For example, we can choose $t=\frac{1}{2}$, $\frac{1}{4}$ and $\frac{1}{16}$. 

Following Remark \ref{remk-invf}, we construct an invariant function 
for the parameterized solution $s(t)$ in Appendix \ref{sec-app}.

In Appendix \ref{sec-app}, for $i=1,2,\ldots,48$ the matrix triple
$$(U_i(t),V_i(t),W_i(t))$$
is called the \emph{$i$-th layer} of $s(t)$.
For the 48 layers, consider the multiset of their rank triples
\begin{equation}\label{eq-rtriple}
\{(\operatorname{rank} U_i(t), \operatorname{rank} V_i(t),
              \operatorname{rank} W_i(t))\}_{i=1}^{48}.
\end{equation}
It is not hard to see that there are 32 layers that have rank triple 
\begin{equation}\label{eq-triple1}
(1,1,1).
\end{equation}
There are 16 layers that have rank triple 
\begin{equation}\label{eq-triple2}
(2,2,2),
\end{equation}
whose layer numbers are
$$R_2=\{4, 9, 12, 15, 17, 19, 22, 25,
      28, 32, 34, 35, 38, 40, 44, 48 \}.$$
Let
$$R_2^4=\{H~|~H\subseteq R_2~\text{and}~|H|=4\}$$
be the set of all subsets of $R_2$ that contain 4 different elements.
Then we have 
$$|R_2^4|=\binom{16}{4}=1820.$$

Let $x_1$, $x_2$, $x_3$, $x_4$ be four variables.
Suppose that $H=\{i_1,i_2,i_3,i_4\}\in R_2^4$ and $i_1<i_2<i_3<i_4$. Then consider the polynomial
\begin{equation*}
\det\left(x_1 U_{i_1}(t)+x_2 U_{i_2}(t)+x_3 U_{i_3}(t)+x_4 U_{i_4}(t)\right),
\end{equation*}
which is the determinant of $x_1 U_{i_1}(t)+x_2 U_{i_2}(t)+x_3 U_{i_3}(t)+x_4 U_{i_4}(t)$. It is a homogeneous polynomial of degree 4 with respect to $x_1$, $x_2$, $x_3$ and $x_4$.
The coefficient of the monomial $x_1x_2x_3x_4$ in it is denoted by
$$C(U,H,t)=[x_1x_2x_3x_4]
\det(x_1 U_{i_1}(t)+x_2 U_{i_2}(t)+x_3 U_{i_3}(t)+x_4 U_{i_4}(t)).$$
Then $C(U,H,t)$ is a rational function in $t$.
Similarly, the coefficients of the monomial $x_1x_2x_3x_4$ in
$\det(x_1 V_{i_1}(t)+x_2 V_{i_2}(t)+x_3 V_{i_3}(t)+x_4 V_{i_4}(t))$ 
and $\det(x_1 W_{i_1}(t)+x_2 W_{i_2}(t)+x_3 W_{i_3}(t)+x_4 W_{i_4}(t))$ are 
denoted by
$$C(V,H,t)=[x_1x_2x_3x_4]
\det(x_1 V_{i_1}(t)+x_2 V_{i_2}(t)+x_3 V_{i_3}(t)+x_4 V_{i_4}(t))$$
and
$$C(W,H,t)=[x_1x_2x_3x_4]
\det(x_1 W_{i_1}(t)+x_2 W_{i_2}(t)+x_3 W_{i_3}(t)+x_4 W_{i_4}(t)),$$
respectively.
Let
$$C(H,t)=C(U,H,t)C(V,H,t)C(W,H,t)$$
denote their product, which is also a one-variable rational function in $t$.
Then define
\begin{equation}\label{eq-invdps}
\varphi (s(t))=\sum_{H\in R_2^4} C(H,t).
\end{equation}

For the four types of group actions of $\mathcal{I}(4,4,4|48)$, the de Groote, layer scaling and permutation summands actions leave the rank triples in (\ref{eq-rtriple}) invariant. Only the $S_3$-symmetry group can change the rank triples. However, by (\ref{eq-triple1}) and (\ref{eq-triple2}) we have that the rank-one  and rank-two triples are invariant under the $S_3$-symmetry group. Then by properties of the determinant, it is not hard to verify that  $\varphi (s(t))$ defined in (\ref{eq-invdps}) is an invariant function for the solution $s(t)$ in Appendix \ref{sec-app} with respect to the $\mathcal{I}(4,4,4|48)$-action.
After computation, we have
\begin{align*}
   \varphi (s(t))&=\sum_{H\in R_2^4} C(H,t)=64+\frac{4}{t^2}+1024 t^2  \\
                 &= 64\left(1+\frac{1}{16t^2}+16t^2\right).
\end{align*}
Thus, $\varphi (s(t))$ is an even function.
It is strictly decreasing  on each of the intervals
$$(-\infty, -\frac{1}{4}]\quad\text{and}\quad(0,\frac{1}{4}],$$
and strictly increasing on each of the intervals
$$[-\frac{1}{4},0)\quad\text{and}\quad[\frac{1}{4},+\infty).$$

Then in each monotone interval, if $t_1\neq t_2$, then $\varphi (s(t_1))\neq\varphi (s(t_2))$ and therefore
$s(t_1)$ and $s(t_2)$ belong to distinct $\mathcal{I}(4,4,4|48)$-orbits.
For example, all $s(t)$ belong to distinct $\mathcal{I}(4,4,4|48)$-orbits  when  $t\in[\frac{1}{4},+\infty)$. In particular, since the coefficients of $s(t)$ are rational, for any two rational numbers $t_1$ and $t_2$ in $[\frac{1}{4},+\infty)$, $s(t_1)$ and $s(t_2)$ are inequivalent 48-multiplication algorithms with rational coefficients.

Moreover, the index sets that give other rank gaps can also be found. For example, we can find $I$ with $|I|=2020$ such that
\begin{equation*} 
2=\operatorname{rank}N(s)-\operatorname{rank}N_I(s)>\operatorname{rank}T(s)-\operatorname{rank}T_I(s)=0.
\end{equation*}

\subsubsection{The test on AlphaTensor's solution in $\mathrm{V}(4,4,4|49)$}
\

In this section, the numerical tests are based on the solution in $\mathrm{V}(4,4,4|49)$ obtained by AlphaTensor \cite{Faw22}. 

When $m=n=p=4$ and $r=49$, the Brent equations $B(4,4,4|49)$ has 
$2352$ variables and $4096$ equations. From the deflation test in \cite{li25def}, we know that the first 4 numbers of deflation sequence of most solutions are
$$(197,197,197,197).$$
The constant deflation sequence provides numerical evidence that
the solution is smooth with local dimension 197.
We randomly choose one such solution and denote it by $s$.
After computing the corresponding Jacobian matrix $J(s)$ defined in (\ref{eq-js}), using the Julia package "Nemo.jl" we obtain the nullspace basis matrix $N(s)$, whose rank is $197$. After computing the tangent basis matrix $T(s)$ defined in (\ref{eq-tsmnpr}), we find that $\operatorname{rank}T(s)=143$.

In this case, it is easy to find an index set $I$ with $|I|=2344$, which gives a rank gap
\begin{equation*} 
1=\operatorname{rank}N(s)-\operatorname{rank}N_I(s)>\operatorname{rank}T(s)-\operatorname{rank}T_I(s)=0.
\end{equation*}
Since the number of variables becomes 
$$2352-2344=8,$$
the reduced polynomial system can be solved by hand. After solving the reduced polynomial system, we obtain a one-dimensional solution curve $s(t)$. We can check that the parameterized solution $s(t)$ satisfies (\ref{eq-tstmnp}) and (\ref{eq-tsst1}). Then $s(t)$ contains infinitely many pairwise inequivalent points.

Since the global dimension gap
$$\operatorname{rank}N(s)-\operatorname{rank}T(s)=197-143=54,$$
is large, we can obtain large rank gaps. For example, we can find $I$ with $|I|=2269$ such that
\begin{equation}\label{eq-gap10}
10=\operatorname{rank}N(s)-\operatorname{rank}N_I(s)>\operatorname{rank}T(s)-\operatorname{rank}T_I(s)=0.
\end{equation}
In the reduced polynomial system, we only have 83 variables. So in this case, we can obtain higher-dimensional parameterized solution sets. After fixing the partial solution $s_I$, the reduced polynomial system can often be divided into smaller individual subsystems. Each subsystem can be solved by computing the Gröbner basis. The dimension of the solution set of the reduced polynomial system is 9, which is less than the rank gap in (\ref{eq-gap10}). So the dimensions of the parameterized solution sets are not always equal to the rank gaps. The parameterized solution set we obtained can be found in \cite{L-23}.

\subsubsection{The test on Laderman's solution in $\mathrm{V}(3,3,3|23)$}
\

In this section, the numerical tests are based on the solution in $\mathrm{V}(3,3,3|23)$ obtained by Laderman \cite{Lad-76}. 

When $m=n=p=3$ and $r=23$, the Brent equations $B(3,3,3|23)$ has 
$621$ variables and $729$ equations.
From the deflation test in \cite{li25def}, we know that the first 4 numbers of deflation sequence of Laderman's solutions are
$$(76,76,76,76).$$
After computing the corresponding Jacobian matrix, we obtain the nullspace basis matrix $N(s)$, whose rank is $76$. For the tangent basis matrix $T(s)$, we find that $\operatorname{rank}T(s)=70$.

It is also not hard to find an index set $I$ with $|I|=613$, which gives a rank gap
\begin{equation*} 
1=\operatorname{rank}N(s)-\operatorname{rank}N_I(s)>\operatorname{rank}T(s)-\operatorname{rank}T_I(s)=0.
\end{equation*}
Since the number of variables is 8, the reduced polynomial system can be easily solved. After solving the reduced polynomial system, we obtain a one-dimensional parameterized solution curve $s(t)$. We can check that the parameterized solution $s(t)$ satisfies (\ref{eq-tstmnp}) and (\ref{eq-tsst1}). So $s(t)$ contains infinitely many pairwise inequivalent points.

We can also find other rank gaps. For example, there exists $I$ with $|I|=574$ such that
\begin{equation*}
 6=\operatorname{rank}N(s)-\operatorname{rank}N_I(s)>\operatorname{rank}T(s)-\operatorname{rank}T_I(s)=0.
\end{equation*}
In the reduced polynomial system, we only have 47 variables. The reduced Brent equations can be solved and the solution set is six-dimensional.

\begin{remark}
By Theorems 5 and 6 of \cite[Chapter 4  \S 5]{cox25}, we can see that a rational parameterized solution set belongs to an irreducible affine variety.
\end{remark}

\subsection{Numerical tests on singular solutions}\label{subs-testsing}
\

The deflation sequences in \cite{li25def} show that some solutions in 
$\mathrm{V}(m,n,p|r)$ are singular. In this section, we show our numerical tests on them. Our tests show that the substitution procedure in Section \ref{subs-subsprocedure} is also suitable to singular solutions.

Firstly, we choose a singular solution $s\in\mathrm{V}(4,4,4|49)$ found by AlphaTensor \cite{Faw22}. The first 4 numbers of deflation sequence of $s$ are $$(198,~182,~181,~181).$$
Computing the corresponding Jacobian matrix, we obtain the nullspace basis matrix $N(s)$, whose rank is $198$. For the tangent basis matrix $T(s)$, we find that $\operatorname{rank}T(s)= 143$.

It is also not hard to find an index set $I$ with $|I|=2330$, which gives a rank gap
\begin{equation*} 
2=\operatorname{rank}N(s)-\operatorname{rank}N_I(s)>
\operatorname{rank}T(s)-\operatorname{rank}T_I(s)=0.
\end{equation*}
After substitution, the number of variables in the reduced polynomial system is 22. Solving the reduced polynomial system, we obtain a two-dimensional parameterized solution set.
We can also find other rank gaps. For example, there exists $I$ with $|I|=2168$ such that
\begin{equation*}
16=\operatorname{rank}N(s)-\operatorname{rank}N_I(s)>
\operatorname{rank}T(s)-\operatorname{rank}T_I(s)=0.
\end{equation*}
The reduced polynomial system contains 184 variables. After solving the reduced Brent equations, a parameterized solution set with dimension 12 is obtained.

Second, we choose a singular solution $s\in\mathrm{V}(3,3,3|23)$ found by Smirnov \cite{Smir-13}. The first 4 numbers of deflation sequence of $s$ are
$$(87,~82,~79,~79).$$
After computing the corresponding Jacobian matrix, we obtain the nullspace basis matrix $N(s)$ with rank $87$. For the tangent basis matrix $T(s)$, we find that $\operatorname{rank}T(s)=70$.

It is also not hard to find an index set $I$ with $|I|=603$, which gives a rank gap
\begin{equation*} 
2=\operatorname{rank}N(s)-\operatorname{rank}N_I(s)>
\operatorname{rank}T(s)-\operatorname{rank}T_I(s)=0.
\end{equation*}
The reduced polynomial system contains 18 variables. Solving the reduced polynomial system, we obtain a two-dimensional parameterized solution set.
We can also find other rank gaps. For example, there exists $I$ with $|I|=550$ such that
\begin{equation*}
15=\operatorname{rank}N(s)-\operatorname{rank}N_I(s)>
\operatorname{rank}T(s)-\operatorname{rank}T_I(s)=0.
\end{equation*}
The reduced polynomial system contains 71 variables. The reduced Brent equations can be solved and the dimension of the solution set is 9.

\section{Final remarks and open problems}
\label{sec-remark}
In this section, we give some remarks and open problems to our substitution method.  
\subsection{Solvability and other computational methods}
\

In this paper, the parameterized solution sets of the reduced Brent equations are obtained from computation of the Gröbner bases \cite{cox25,Dec25}.  

Let $k=(mn+np+pm)r$, and let $s\in\mathrm{V}(m,n,p | r)$ be a solution. Suppose that an index set $I\subseteq [k]$ is obtained by \textbf{Step} 5 of Section \ref{subs-subsprocedure}. Then number of variables in the reduced Brent equations is $$a=k-|I|.$$
The solvability of the reduced Brent equations can be gauged by $a$. From our numerical tests we find that if $a\leq 200$, the Gröbner bases of the reduced Brent equations are easy to be found; if $200<a\leq 300$, 
the Gröbner bases can still be obtained for some reduced Brent equations.

It is well known that the computation of the Gröbner basis is expensive. However,  after making a substitution, the complexity of the reduced Brent equations is much lower than before. For example, we can find some candidate index sets $I$ such that $a\leq 600$.  If the Gröbner bases are hard to be computed, then other classical computational methods can be used to find solutions of the reduced Brent equations, such as the nonlinear least-squares method \cite{kaporin24-cmmp,kaporin24-Dokl,CV25} and the SAT-based method \cite{Heule21}.

\subsection{The local coordinate cross-section and small index set}
\

Every tensor admits a layer scaling action. When finding other tensor decompositions from a known decomposition, the quotient of the layer scaling action is easy to be realized in a minimal way. For example, following the method in \cite{HOOS2019} we can fix any two nonzero partial solutions in each rank-one summand.
However, when there exist other nontrivial positive-dimensional isotropy group actions, such as the de Groote action, are there some methods to realize the quotient by fixing the minimal number of the partial solutions?

Let $T(s)$ be the tangent basis matrix defined in \ref{eq-Ts}. Suppose that
$\operatorname{rank}T(s)=q$. Then the quotient of the positive-dimensional isotropy group action can be realized by choosing an index set $I$ with $|I|=q$. In fact, since 
$\operatorname{rank}T(s)=q$, there exist $q$ rows of $T(s)$ that are linear independent. Let $I$ be the corresponding row index set. Then we have $|I|=q$ and
$$\operatorname{rank}T_I(s)=q.$$
It is not hard to see that $q$ is the minimal number of partial solutions that should be fixed.

Take the Brent equations as an example. For the solution $s\in \mathrm{V}(4,4,4|48)$ found in \cite{dumas25}, by the discussion in Section \ref{subs-testdps}, we know that $\operatorname{rank}N(s)=151$ and $\operatorname{rank}T(s)=141$. So we can find an index set $I$ such that $|I|=141$ and $\operatorname{rank}T_I(s)=141$. By Lemma \ref{lem-subfull} we also have $\operatorname{rank}N_I(s)=141$. Then the rank gap associated to $I$ is $10$. Thus, the de Groote and layer scaling actions are quotiented in a minimal way. Moreover, by fixing partial solution that indexed by $I$, we obtain a local (coordinate) cross-section \cite{Hubert07sa,olver12mf}.

\subsection{On searching the index sets}
\

To find candidate index sets in \textbf{Step} 5 of Section \ref{subs-subsprocedure}, we use the method raised in Section \ref{subs-findI}. Although many candidate index sets can be obtained,  
we cannot guarantee the index sets are optimal. Therefore, by improving the method in Section \ref{subs-findI}, more computable reduced Brent equations can be found. Nowadays, $I$ can also be found by the Large Language Model.

\subsection{Finding integer solutions in $\mathrm{V}(4,4,4|48)$}
\

In \cite{moran26}, the authors discuss how to transform a complex-valued solution of $\mathrm{V}(m,n,p|r)$ into a rational solution. They also give some necessary conditions that decide if a solution of $\mathrm{V}(m,n,p|r)$ can be transformed into an integer solution. They show that the solution
of \cite{dumas25} cannot be transformed into an integer solution. However, after making the substitution as in Section \ref{subs-testdps} we can try to find integer solutions in the parameterized solution sets.

\subsection{Equivalence of the parameterized solution sets}
\

Under the isotropy group action, it is interesting to decide when two parameterized solution sets are equivalent.  As an example, consider two one-dimensional parameterized solution sets $s_1(t)$ and $s_2(t)$ in $\mathrm{V}(m,n,p|r)$. We want to know when there exists an element $g\in \mathcal{I}(m,n,p|r)$ such that $s_1(t)=gs_2(t)$.

\section*{Acknowledgments}
Many thanks to Zehao Quan and Zhe Xie for their helpful discussions. We are grateful to Professor Ke Ye for his valuable suggestions.

\bibliographystyle{amsplain}

\appendix
\section{A Parameterized Rational Solution in $\mathrm{V}(4,4,4|48)$}
\label{sec-app}

For every $t\in\mathbb{R}^{\times}$, the matrices below satisfy
$$
\sum_{i=1}^{48} U_i(t)\otimes V_i(t)\otimes W_i(t)
=\langle4,4,4\rangle.$$ 
This family is obtained by parameterizing the solution provided in
\cite{dumas25}.

\begingroup
\begin{longtable}{|c|c|c|c|}
\hline
 & $U_i(t)$ & $V_i(t)$ & $W_i(t)$ \\ \hline
\endfirsthead
\hline
 & $U_i(t)$ & $V_i(t)$ & $W_i(t)$ \\ \hline
\endhead
\hline
\endfoot
\hline
\endlastfoot
$i=1$
& $\begin{matrix}
-1 & 1 & 1 & 1 \\
-1 & 1 & 1 & 1 \\
1 & -1 & -1 & -1 \\
-1 & 1 & 1 & 1
\end{matrix}$
& $\begin{matrix}
0 & 0 & 0 & 0 \\
0 & 0 & 0 & 0 \\
1 & 0 & 0 & 0 \\
1 & 0 & 0 & 0
\end{matrix}$
& $\begin{matrix}
0 & 0 & -\tfrac{1}{4} & \tfrac{1}{4} \\
0 & 0 & \tfrac{1}{4} & -\tfrac{1}{4} \\
0 & 0 & -\tfrac{4t-1}{4} & \tfrac{4t-1}{4} \\
0 & 0 & -\tfrac{1}{4} & \tfrac{1}{4}
\end{matrix}$ \\ \hline
$i=2$
& $\begin{matrix}
-1 & 0 & 0 & 0 \\
1 & 0 & 0 & 0 \\
1 & 0 & 0 & 0 \\
-1 & 0 & 0 & 0
\end{matrix}$
& $\begin{matrix}
0 & 1 & 0 & 1 \\
0 & -1 & 0 & -1 \\
0 & 0 & 0 & 0 \\
0 & 0 & 0 & 0
\end{matrix}$
& $\begin{matrix}
-\tfrac{1}{8t} & \tfrac{1}{8t} & 0 & 0 \\
0 & 0 & 0 & 0 \\
0 & 0 & 0 & 0 \\
-\tfrac{1}{2} & \tfrac{1}{2} & 0 & 0
\end{matrix}$ \\ \hline
$i=3$
& $\begin{matrix}
0 & 0 & -1 & 0 \\
0 & 0 & 1 & 0 \\
0 & 0 & 1 & 0 \\
0 & 0 & 1 & 0
\end{matrix}$
& $\begin{matrix}
0 & 1 & 0 & 0 \\
0 & 0 & 0 & 0 \\
0 & 1 & 0 & 0 \\
0 & 0 & 0 & 0
\end{matrix}$
& $\begin{matrix}
0 & 0 & 0 & 0 \\
0 & \tfrac{1}{2} & 0 & \tfrac{1}{2} \\
0 & -2t & 0 & -2t \\
0 & \tfrac{4t-1}{2} & 0 & \tfrac{4t-1}{2}
\end{matrix}$ \\ \hline
$i=4$
& $\begin{matrix}
0 & 0 & -1 & -1 \\
0 & 0 & -1 & -1 \\
0 & 0 & 1 & -1 \\
0 & 0 & 1 & -1
\end{matrix}$
& $\begin{matrix}
0 & 0 & 0 & 0 \\
0 & 0 & 0 & 0 \\
0 & t & \tfrac{1}{4} & 0 \\
t & 0 & -\tfrac{1}{4} & 0
\end{matrix}$
& $\begin{matrix}
0 & 0 & 0 & 0 \\
-\tfrac{1}{2t} & -\tfrac{1}{2t} & 0 & 0 \\
1 & 1 & 1 & 1 \\
-\tfrac{2t-1}{2t} & -\tfrac{2t-1}{2t} & -1 & -1
\end{matrix}$ \\ \hline
$i=5$
& $\begin{matrix}
1 & 1 & 1 & -1 \\
-1 & -1 & -1 & 1 \\
1 & 1 & 1 & -1 \\
1 & 1 & 1 & -1
\end{matrix}$
& $\begin{matrix}
-4t & 0 & 1 & 1 \\
-4t & 0 & 1 & 1 \\
0 & 0 & 0 & 0 \\
0 & 0 & 0 & 0
\end{matrix}$
& $\begin{matrix}
0 & 0 & 0 & 0 \\
0 & 0 & 0 & 0 \\
0 & 0 & 0 & 0 \\
0 & 0 & \tfrac{1}{4} & \tfrac{1}{4}
\end{matrix}$ \\ \hline
$i=6$
& $\begin{matrix}
1 & 1 & -1 & 1 \\
-1 & -1 & 1 & -1 \\
-1 & -1 & 1 & -1 \\
1 & 1 & -1 & 1
\end{matrix}$
& $\begin{matrix}
0 & 0 & 0 & 0 \\
0 & 1 & 0 & 1 \\
0 & 0 & 0 & 0 \\
0 & 1 & 0 & 1
\end{matrix}$
& $\begin{matrix}
0 & \tfrac{1}{4} & 0 & -\tfrac{1}{4} \\
0 & -\tfrac{1}{4} & 0 & \tfrac{1}{4} \\
0 & t & 0 & -t \\
0 & 0 & 0 & 0
\end{matrix}$ \\ \hline
$i=7$
& $\begin{matrix}
0 & 0 & 0 & -1 \\
0 & 0 & 0 & 1 \\
0 & 0 & 0 & 1 \\
0 & 0 & 0 & 1
\end{matrix}$
& $\begin{matrix}
0 & 0 & 0 & 0 \\
0 & 1 & 0 & 0 \\
0 & 0 & 0 & 0 \\
0 & -1 & 0 & 0
\end{matrix}$
& $\begin{matrix}
0 & 0 & 0 & 0 \\
\tfrac{1}{2} & 0 & -\tfrac{1}{2} & 0 \\
0 & 0 & 0 & 0 \\
-\tfrac{1}{2} & 0 & \tfrac{1}{2} & 0
\end{matrix}$ \\ \hline
$i=8$
& $\begin{matrix}
1 & 1 & 1 & -1 \\
-1 & -1 & -1 & 1 \\
1 & 1 & 1 & -1 \\
-1 & -1 & -1 & 1
\end{matrix}$
& $\begin{matrix}
4t & 4t-1 & 0 & -1 \\
0 & 0 & 0 & 0 \\
4t & 4t-1 & 0 & -1 \\
0 & 0 & 0 & 0
\end{matrix}$
& $\begin{matrix}
0 & 0 & 0 & 0 \\
0 & 0 & 0 & 0 \\
0 & 0 & 0 & 0 \\
0 & \tfrac{1}{4} & 0 & \tfrac{1}{4}
\end{matrix}$ \\ \hline
$i=9$
& $\begin{matrix}
0 & 0 & -1 & -1 \\
0 & 0 & 1 & 1 \\
-1 & 1 & 0 & 0 \\
-1 & 1 & 0 & 0
\end{matrix}$
& $\begin{matrix}
4t & 1 & -1 & -1 \\
-4t & -1 & 1 & 1 \\
-4t & 1 & 1 & 1 \\
-4t & 1 & 1 & 1
\end{matrix}$
& $\begin{matrix}
\tfrac{1}{8} & -\tfrac{1}{8} & \tfrac{1}{8} & \tfrac{1}{8} \\
-\tfrac{1}{8} & \tfrac{1}{8} & -\tfrac{1}{8} & -\tfrac{1}{8} \\
\tfrac{2t-1}{4} & -\tfrac{2t-1}{4} & \tfrac{t}{2} & \tfrac{t}{2} \\
\tfrac{1}{8} & -\tfrac{1}{8} & \tfrac{1}{8} & \tfrac{1}{8}
\end{matrix}$ \\ \hline
$i=10$
& $\begin{matrix}
1 & 0 & 0 & 0 \\
1 & 0 & 0 & 0 \\
1 & 0 & 0 & 0 \\
-1 & 0 & 0 & 0
\end{matrix}$
& $\begin{matrix}
1-4t & 1 & 1 & 1 \\
0 & 0 & 0 & 0 \\
4t-1 & -1 & -1 & -1 \\
0 & 0 & 0 & 0
\end{matrix}$
& $\begin{matrix}
\tfrac{1}{2} & 0 & \tfrac{1}{2} & 0 \\
0 & 0 & 0 & 0 \\
0 & 0 & 0 & 0 \\
2t & 0 & 2t & 0
\end{matrix}$ \\ \hline
$i=11$
& $\begin{matrix}
-1 & -1 & -1 & 1 \\
1 & 1 & 1 & -1 \\
1 & 1 & 1 & -1 \\
-1 & -1 & -1 & 1
\end{matrix}$
& $\begin{matrix}
0 & 0 & 0 & 0 \\
0 & 1 & 0 & 1 \\
0 & 0 & 0 & 0 \\
0 & -1 & 0 & -1
\end{matrix}$
& $\begin{matrix}
0 & 0 & 0 & 0 \\
0 & 0 & 0 & 0 \\
0 & 0 & 0 & 0 \\
-\tfrac{1}{4} & 0 & \tfrac{1}{4} & 0
\end{matrix}$ \\ \hline
$i=12$
& $\begin{matrix}
0 & 1 & 0 & 1 \\
-1 & 0 & 1 & 0 \\
0 & 1 & 0 & 1 \\
-1 & 0 & 1 & 0
\end{matrix}$
& $\begin{matrix}
-4t & 1-8t & -1 & 1 \\
4t & -1 & -1 & -1 \\
4t & 8t-1 & 1 & -1 \\
4t & -1 & -1 & -1
\end{matrix}$
& $\begin{matrix}
\tfrac{1}{4} & 0 & \tfrac{1}{4} & 0 \\
-\tfrac{1}{4} & 0 & -\tfrac{1}{4} & 0 \\
\tfrac{8t-1}{8} & \tfrac{1}{8} & \tfrac{8t-1}{8} & \tfrac{1}{8} \\
\tfrac{1}{8} & -\tfrac{1}{8} & \tfrac{1}{8} & -\tfrac{1}{8}
\end{matrix}$ \\ \hline
$i=13$
& $\begin{matrix}
0 & 0 & 1 & 0 \\
0 & 0 & -1 & 0 \\
0 & 0 & 1 & 0 \\
0 & 0 & 1 & 0
\end{matrix}$
& $\begin{matrix}
4t & 0 & -1 & -1 \\
0 & 0 & 0 & 0 \\
-4t & 0 & 1 & 1 \\
0 & 0 & 0 & 0
\end{matrix}$
& $\begin{matrix}
0 & 0 & 0 & 0 \\
-\tfrac{1}{2} & 0 & -\tfrac{1}{2} & 0 \\
2t & 0 & 2t & 0 \\
-\tfrac{4t-1}{2} & 0 & -\tfrac{4t-1}{2} & 0
\end{matrix}$ \\ \hline
$i=14$
& $\begin{matrix}
0 & 0 & 1 & 0 \\
0 & 0 & 1 & 0 \\
0 & 0 & -1 & 0 \\
0 & 0 & -1 & 0
\end{matrix}$
& $\begin{matrix}
0 & 0 & 0 & 0 \\
0 & 0 & 0 & 0 \\
4t & 0 & -1 & 0 \\
-4t & 0 & 1 & 0
\end{matrix}$
& $\begin{matrix}
0 & 0 & 0 & 0 \\
\tfrac{1}{8t} & \tfrac{1}{8t} & 0 & 0 \\
-\tfrac{1}{2} & -\tfrac{1}{2} & 0 & 0 \\
\tfrac{4t-1}{8t} & \tfrac{4t-1}{8t} & 0 & 0
\end{matrix}$ \\ \hline
$i=15$
& $\begin{matrix}
1 & 0 & 1 & 0 \\
1 & 0 & -1 & 0 \\
-1 & 0 & -1 & 0 \\
1 & 0 & -1 & 0
\end{matrix}$
& $\begin{matrix}
0 & 1 & 0 & 0 \\
0 & 0 & 0 & 0 \\
-1 & 0 & 0 & 0 \\
0 & 0 & 0 & 0
\end{matrix}$
& $\begin{matrix}
-\tfrac{1}{4} & \tfrac{1}{4} & \tfrac{1}{4} & \tfrac{1}{4} \\
\tfrac{1}{4} & \tfrac{1}{4} & -\tfrac{1}{4} & \tfrac{1}{4} \\
-t & -t & t & -t \\
-\tfrac{1}{4} & \tfrac{8t-1}{4} & \tfrac{1}{4} & \tfrac{8t-1}{4}
\end{matrix}$ \\ \hline
$i=16$
& $\begin{matrix}
1 & 1 & -1 & 1 \\
-1 & -1 & 1 & -1 \\
1 & 1 & -1 & 1 \\
-1 & -1 & 1 & -1
\end{matrix}$
& $\begin{matrix}
4t & 4t-1 & 0 & -1 \\
0 & 0 & 0 & 0 \\
-4t & 1-4t & 0 & 1 \\
0 & 0 & 0 & 0
\end{matrix}$
& $\begin{matrix}
\tfrac{1}{4} & 0 & \tfrac{1}{4} & 0 \\
-\tfrac{1}{4} & 0 & -\tfrac{1}{4} & 0 \\
t & 0 & t & 0 \\
0 & 0 & 0 & 0
\end{matrix}$ \\ \hline
$i=17$
& $\begin{matrix}
1 & 1 & 0 & 0 \\
1 & 1 & 0 & 0 \\
-1 & 1 & 0 & 0 \\
-1 & 1 & 0 & 0
\end{matrix}$
& $\begin{matrix}
0 & -4t & -1 & 0 \\
4t & 0 & -1 & 0 \\
0 & 0 & 0 & 0 \\
0 & 0 & 0 & 0
\end{matrix}$
& $\begin{matrix}
0 & 0 & \tfrac{1}{8t} & \tfrac{1}{8t} \\
0 & 0 & 0 & 0 \\
-\tfrac{1}{4} & -\tfrac{1}{4} & \tfrac{1}{4} & \tfrac{1}{4} \\
\tfrac{1}{4} & \tfrac{1}{4} & \tfrac{1}{4} & \tfrac{1}{4}
\end{matrix}$ \\ \hline
$i=18$
& $\begin{matrix}
-1 & 1 & 1 & 1 \\
-1 & 1 & 1 & 1 \\
1 & -1 & -1 & -1 \\
1 & -1 & -1 & -1
\end{matrix}$
& $\begin{matrix}
0 & 0 & 0 & 0 \\
4t & 0 & -1 & 0 \\
0 & 0 & 0 & 0 \\
4t & 0 & -1 & 0
\end{matrix}$
& $\begin{matrix}
0 & \tfrac{1}{4} & 0 & -\tfrac{1}{4} \\
0 & -\tfrac{1}{4} & 0 & \tfrac{1}{4} \\
0 & \tfrac{4t-1}{4} & 0 & -\tfrac{4t-1}{4} \\
0 & \tfrac{1}{4} & 0 & -\tfrac{1}{4}
\end{matrix}$ \\ \hline
$i=19$
& $\begin{matrix}
0 & 1 & 0 & 1 \\
-1 & 0 & 1 & 0 \\
0 & -1 & 0 & -1 \\
1 & 0 & -1 & 0
\end{matrix}$
& $\begin{matrix}
4t & -1 & -1 & -1 \\
4t & 1 & -1 & 1 \\
-4t & 1 & 1 & 1 \\
4t & 1 & -1 & 1
\end{matrix}$
& $\begin{matrix}
0 & -\tfrac{1}{4} & 0 & \tfrac{1}{4} \\
0 & \tfrac{1}{4} & 0 & -\tfrac{1}{4} \\
-\tfrac{1}{8} & -\tfrac{8t-1}{8} & \tfrac{1}{8} & \tfrac{8t-1}{8} \\
\tfrac{1}{8} & -\tfrac{1}{8} & -\tfrac{1}{8} & \tfrac{1}{8}
\end{matrix}$ \\ \hline
$i=20$
& $\begin{matrix}
-1 & 1 & -1 & -1 \\
-1 & 1 & -1 & -1 \\
1 & -1 & 1 & 1 \\
1 & -1 & 1 & 1
\end{matrix}$
& $\begin{matrix}
0 & 0 & 0 & 0 \\
-4t & 0 & 1 & 0 \\
0 & 0 & 0 & 0 \\
4t & 0 & -1 & 0
\end{matrix}$
& $\begin{matrix}
0 & 0 & 0 & 0 \\
0 & 0 & 0 & 0 \\
\tfrac{1}{4} & 0 & -\tfrac{1}{4} & 0 \\
0 & 0 & 0 & 0
\end{matrix}$ \\ \hline
$i=21$
& $\begin{matrix}
0 & 0 & 1 & 0 \\
0 & 0 & -1 & 0 \\
0 & 0 & -1 & 0 \\
0 & 0 & 1 & 0
\end{matrix}$
& $\begin{matrix}
0 & 0 & 0 & 0 \\
0 & 0 & 0 & 0 \\
0 & 1 & 0 & 1 \\
0 & 1 & 0 & 1
\end{matrix}$
& $\begin{matrix}
0 & 0 & 0 & 0 \\
0 & 0 & -\tfrac{1}{8t} & \tfrac{1}{8t} \\
0 & 0 & \tfrac{1}{2} & -\tfrac{1}{2} \\
0 & 0 & -\tfrac{4t-1}{8t} & \tfrac{4t-1}{8t}
\end{matrix}$ \\ \hline
$i=22$
& $\begin{matrix}
1 & 0 & 1 & 0 \\
0 & -1 & 0 & 1 \\
1 & 0 & 1 & 0 \\
0 & -1 & 0 & 1
\end{matrix}$
& $\begin{matrix}
4t & 8t-1 & 1 & -1 \\
-4t & 1 & 1 & 1 \\
4t & 8t-1 & 1 & -1 \\
4t & -1 & -1 & -1
\end{matrix}$
& $\begin{matrix}
0 & 0 & 0 & 0 \\
0 & 0 & 0 & 0 \\
\tfrac{1}{8} & -\tfrac{1}{8} & \tfrac{1}{8} & -\tfrac{1}{8} \\
-\tfrac{1}{8} & -\tfrac{1}{8} & -\tfrac{1}{8} & -\tfrac{1}{8}
\end{matrix}$ \\ \hline
$i=23$
& $\begin{matrix}
0 & 0 & 0 & 1 \\
0 & 0 & 0 & -1 \\
0 & 0 & 0 & 1 \\
0 & 0 & 0 & 1
\end{matrix}$
& $\begin{matrix}
0 & 0 & 0 & 0 \\
-4t & 0 & 1 & 1 \\
0 & 0 & 0 & 0 \\
-4t & 0 & 1 & 1
\end{matrix}$
& $\begin{matrix}
0 & 0 & 0 & 0 \\
0 & \tfrac{1}{2} & 0 & -\tfrac{1}{2} \\
0 & 0 & 0 & 0 \\
0 & -\tfrac{1}{2} & 0 & \tfrac{1}{2}
\end{matrix}$ \\ \hline
$i=24$
& $\begin{matrix}
1 & -1 & 1 & 1 \\
-1 & 1 & -1 & -1 \\
1 & -1 & 1 & 1 \\
1 & -1 & 1 & 1
\end{matrix}$
& $\begin{matrix}
4t & 0 & -1 & -1 \\
-4t & 0 & 1 & 1 \\
0 & 0 & 0 & 0 \\
0 & 0 & 0 & 0
\end{matrix}$
& $\begin{matrix}
0 & 0 & 0 & 0 \\
0 & 0 & 0 & 0 \\
-\tfrac{1}{4} & \tfrac{1}{4} & 0 & 0 \\
0 & 0 & 0 & 0
\end{matrix}$ \\ \hline
$i=25$
& $\begin{matrix}
1 & 1 & 0 & 0 \\
-1 & -1 & 0 & 0 \\
0 & 0 & 1 & -1 \\
0 & 0 & 1 & -1
\end{matrix}$
& $\begin{matrix}
4t & 1 & -1 & -1 \\
4t & 1 & -1 & -1 \\
-4t & 1 & 1 & 1 \\
4t & -1 & -1 & -1
\end{matrix}$
& $\begin{matrix}
-\tfrac{1}{8} & \tfrac{1}{8} & -\tfrac{1}{8} & -\tfrac{1}{8} \\
\tfrac{1}{8} & -\tfrac{1}{8} & \tfrac{1}{8} & \tfrac{1}{8} \\
-\tfrac{t}{2} & \tfrac{t}{2} & -\tfrac{t}{2} & -\tfrac{t}{2} \\
-\tfrac{1}{8} & \tfrac{1}{8} & \tfrac{1}{8} & \tfrac{1}{8}
\end{matrix}$ \\ \hline
$i=26$
& $\begin{matrix}
1 & -1 & 1 & 1 \\
1 & -1 & 1 & 1 \\
1 & -1 & 1 & 1 \\
-1 & 1 & -1 & -1
\end{matrix}$
& $\begin{matrix}
0 & 0 & 0 & 0 \\
0 & 0 & 0 & 0 \\
1-4t & 1 & 1 & 1 \\
1-4t & 1 & 1 & 1
\end{matrix}$
& $\begin{matrix}
0 & 0 & 0 & 0 \\
0 & 0 & 0 & 0 \\
0 & 0 & \tfrac{1}{4} & -\tfrac{1}{4} \\
0 & 0 & 0 & 0
\end{matrix}$ \\ \hline
$i=27$
& $\begin{matrix}
0 & 1 & 0 & 0 \\
0 & 1 & 0 & 0 \\
0 & 1 & 0 & 0 \\
0 & 1 & 0 & 0
\end{matrix}$
& $\begin{matrix}
0 & 1 & \tfrac{1}{4t} & 0 \\
0 & 1 & \tfrac{1}{4t} & 0 \\
0 & 0 & 0 & 0 \\
0 & 0 & 0 & 0
\end{matrix}$
& $\begin{matrix}
0 & 0 & \tfrac{1}{2} & \tfrac{1}{2} \\
0 & 0 & 0 & 0 \\
0 & 0 & 2t & 2t \\
0 & 0 & 0 & 0
\end{matrix}$ \\ \hline
$i=28$
& $\begin{matrix}
0 & 0 & 1 & 1 \\
0 & 0 & 1 & 1 \\
1 & -1 & 0 & 0 \\
-1 & 1 & 0 & 0
\end{matrix}$
& $\begin{matrix}
4t & -1 & -1 & -1 \\
-4t & 1 & 1 & 1 \\
-4t+2& 1 & 1 & 1 \\
-4t+2 & 1 & 1 & 1
\end{matrix}$
& $\begin{matrix}
\tfrac{1}{8} & \tfrac{1}{8} & \tfrac{1}{8} & -\tfrac{1}{8} \\
-\tfrac{1}{8} & -\tfrac{1}{8} & -\tfrac{1}{8} & \tfrac{1}{8} \\
\tfrac{t}{2} & \tfrac{t}{2} & \tfrac{2t-1}{4} & -\tfrac{2t-1}{4} \\
\tfrac{1}{8} & \tfrac{1}{8} & \tfrac{1}{8} & -\tfrac{1}{8}
\end{matrix}$ \\ \hline
$i=29$
& $\begin{matrix}
1 & -1 & -1 & -1 \\
-1 & 1 & 1 & 1 \\
-1 & 1 & 1 & 1 \\
-1 & 1 & 1 & 1
\end{matrix}$
& $\begin{matrix}
0 & -1 & 0 & 0 \\
0 & 1 & 0 & 0 \\
0 & 0 & 0 & 0 \\
0 & 0 & 0 & 0
\end{matrix}$
& $\begin{matrix}
\tfrac{1}{4} & -\tfrac{1}{4} & 0 & 0 \\
-\tfrac{1}{4} & \tfrac{1}{4} & 0 & 0 \\
\tfrac{4t-1}{4} & -\tfrac{4t-1}{4} & 0 & 0 \\
\tfrac{1}{4} & -\tfrac{1}{4} & 0 & 0
\end{matrix}$ \\ \hline
$i=30$
& $\begin{matrix}
0 & 0 & 0 & 1 \\
0 & 0 & 0 & 1 \\
0 & 0 & 0 & 1 \\
0 & 0 & 0 & 1
\end{matrix}$
& $\begin{matrix}
0 & 0 & 0 & 0 \\
0 & 0 & 0 & 0 \\
0 & -1 & -\tfrac{1}{4t} & 0 \\
0 & 1 & \tfrac{1}{4t} & 0
\end{matrix}$
& $\begin{matrix}
0 & 0 & 0 & 0 \\
\tfrac{1}{2} & \tfrac{1}{2} & 0 & 0 \\
0 & 0 & 0 & 0 \\
-\tfrac{1}{2} & -\tfrac{1}{2} & 0 & 0
\end{matrix}$ \\ \hline
$i=31$
& $\begin{matrix}
0 & 1 & 0 & 0 \\
0 & -1 & 0 & 0 \\
0 & 1 & 0 & 0 \\
0 & -1 & 0 & 0
\end{matrix}$
& $\begin{matrix}
1 & \tfrac{4t-1}{4t} & 0 & -\tfrac{1}{4t} \\
-1 & -\tfrac{4t-1}{4t} & 0 & \tfrac{1}{4t} \\
0 & 0 & 0 & 0 \\
0 & 0 & 0 & 0
\end{matrix}$
& $\begin{matrix}
-\tfrac{1}{2} & \tfrac{1}{2} & 0 & 0 \\
0 & 0 & 0 & 0 \\
-2t & 2t & 0 & 0 \\
0 & 0 & 0 & 0
\end{matrix}$ \\ \hline
$i=32$
& $\begin{matrix}
1 & 0 & 1 & 0 \\
0 & -1 & 0 & 1 \\
-1 & 0 & -1 & 0 \\
0 & 1 & 0 & -1
\end{matrix}$
& $\begin{matrix}
4t & -1 & -1 & -1 \\
4t & 1 & -1 & 1 \\
4t & -1 & -1 & -1 \\
-4t & -1 & 1 & -1
\end{matrix}$
& $\begin{matrix}
0 & 0 & 0 & 0 \\
0 & 0 & 0 & 0 \\
-\tfrac{1}{8} & \tfrac{1}{8} & \tfrac{1}{8} & -\tfrac{1}{8} \\
-\tfrac{1}{8} & -\tfrac{1}{8} & \tfrac{1}{8} & \tfrac{1}{8}
\end{matrix}$ \\ \hline
$i=33$
& $\begin{matrix}
-1 & -1 & 1 & -1 \\
1 & 1 & -1 & 1 \\
1 & 1 & -1 & 1 \\
1 & 1 & -1 & 1
\end{matrix}$
& $\begin{matrix}
0 & -1 & 0 & 0 \\
0 & -1 & 0 & 0 \\
0 & 0 & 0 & 0 \\
0 & 0 & 0 & 0
\end{matrix}$
& $\begin{matrix}
0 & 0 & \tfrac{1}{4} & \tfrac{1}{4} \\
0 & 0 & -\tfrac{1}{4} & -\tfrac{1}{4} \\
0 & 0 & t & t \\
0 & 0 & 0 & 0
\end{matrix}$ \\ \hline
$i=34$
& $\begin{matrix}
0 & 0 & -1 & -1 \\
0 & 0 & 1 & 1 \\
0 & 0 & 1 & -1 \\
0 & 0 & -1 & 1
\end{matrix}$
& $\begin{matrix}
0 & 0 & 0 & 0 \\
0 & 0 & 0 & 0 \\
-4t & 1-4t & 0 & 1 \\
0 & 1 & 0 & 1
\end{matrix}$
& $\begin{matrix}
0 & 0 & 0 & 0 \\
0 & 0 & -\tfrac{1}{8t} & \tfrac{1}{8t} \\
\tfrac{1}{4} & -\tfrac{1}{4} & \tfrac{1}{4} & -\tfrac{1}{4} \\
-\tfrac{1}{4} & \tfrac{1}{4} & -\tfrac{2t-1}{8t} & \tfrac{2t-1}{8t}
\end{matrix}$ \\ \hline
$i=35$
& $\begin{matrix}
1 & 1 & 0 & 0 \\
1 & 1 & 0 & 0 \\
0 & 0 & 1 & -1 \\
0 & 0 & -1 & 1
\end{matrix}$
& $\begin{matrix}
-4t & 1 & 1 & 1 \\
-4t & 1 & 1 & 1 \\
4t-2 & -1 & -1 & -1 \\
-4t+2 & 1 & 1 & 1
\end{matrix}$
& $\begin{matrix}
-\tfrac{1}{8} & -\tfrac{1}{8} & -\tfrac{1}{8} & \tfrac{1}{8} \\
\tfrac{1}{8} & \tfrac{1}{8} & \tfrac{1}{8} & -\tfrac{1}{8} \\
-\tfrac{t}{2} & -\tfrac{t}{2} & -\tfrac{t}{2} & \tfrac{t}{2} \\
\tfrac{1}{8} & \tfrac{1}{8} & -\tfrac{1}{8} & \tfrac{1}{8}
\end{matrix}$ \\ \hline
$i=36$
& $\begin{matrix}
1 & -1 & 1 & 1 \\
1 & -1 & 1 & 1 \\
1 & -1 & 1 & 1 \\
1 & -1 & 1 & 1
\end{matrix}$
& $\begin{matrix}
0 & 4t & 1 & 0 \\
0 & 0 & 0 & 0 \\
0 & 4t & 1 & 0 \\
0 & 0 & 0 & 0
\end{matrix}$
& $\begin{matrix}
0 & 0 & 0 & 0 \\
0 & 0 & 0 & 0 \\
0 & \tfrac{1}{4} & 0 & \tfrac{1}{4} \\
0 & 0 & 0 & 0
\end{matrix}$ \\ \hline
$i=37$
& $\begin{matrix}
0 & 1 & 0 & 0 \\
0 & 1 & 0 & 0 \\
0 & 1 & 0 & 0 \\
0 & -1 & 0 & 0
\end{matrix}$
& $\begin{matrix}
0 & 0 & 0 & 0 \\
1-4t & 1 & 1 & 1 \\
0 & 0 & 0 & 0 \\
1-4t & 1 & 1 & 1
\end{matrix}$
& $\begin{matrix}
0 & \tfrac{1}{2} & 0 & -\tfrac{1}{2} \\
0 & 0 & 0 & 0 \\
0 & 2t & 0 & -2t \\
0 & 0 & 0 & 0
\end{matrix}$ \\ \hline
$i=38$
& $\begin{matrix}
0 & 1 & 0 & 1 \\
0 & 1 & 0 & -1 \\
0 & 1 & 0 & 1 \\
0 & -1 & 0 & 1
\end{matrix}$
& $\begin{matrix}
0 & 0 & 0 & 0 \\
-4t & 0 & 1 & 1 \\
0 & 0 & 0 & 0 \\
1-4t & 1 & 1 & 1
\end{matrix}$
& $\begin{matrix}
\tfrac{1}{4} & -\tfrac{1}{4} & \tfrac{1}{4} & \tfrac{1}{4} \\
-\tfrac{1}{4} & -\tfrac{1}{4} & -\tfrac{1}{4} & \tfrac{1}{4} \\
t & -t & t & t \\
\tfrac{1}{4} & \tfrac{1}{4} & \tfrac{1}{4} & -\tfrac{1}{4}
\end{matrix}$ \\ \hline
$i=39$
& $\begin{matrix}
0 & 1 & 0 & 0 \\
0 & 1 & 0 & 0 \\
0 & -1 & 0 & 0 \\
0 & 1 & 0 & 0
\end{matrix}$
& $\begin{matrix}
0 & 0 & 0 & 0 \\
-1 & 0 & 0 & 0 \\
0 & 0 & 0 & 0 \\
1 & 0 & 0 & 0
\end{matrix}$
& $\begin{matrix}
-\tfrac{1}{2} & 0 & \tfrac{1}{2} & 0 \\
0 & 0 & 0 & 0 \\
-2t & 0 & 2t & 0 \\
0 & 0 & 0 & 0
\end{matrix}$ \\ \hline
$i=40$
& $\begin{matrix}
-1 & -1 & 0 & 0 \\
1 & 1 & 0 & 0 \\
1 & -1 & 0 & 0 \\
-1 & 1 & 0 & 0
\end{matrix}$
& $\begin{matrix}
4t & 4t-1 & 0 & -1 \\
0 & 1 & 0 & 1 \\
0 & 0 & 0 & 0 \\
0 & 0 & 0 & 0
\end{matrix}$
& $\begin{matrix}
-\tfrac{1}{8t} & \tfrac{1}{8t} & 0 & 0 \\
0 & 0 & 0 & 0 \\
-\tfrac{1}{4} & \tfrac{1}{4} & \tfrac{1}{4} & -\tfrac{1}{4} \\
-\tfrac{1}{4} & \tfrac{1}{4} & -\tfrac{1}{4} & \tfrac{1}{4}
\end{matrix}$ \\ \hline
$i=41$
& $\begin{matrix}
-1 & 1 & 1 & 1 \\
-1 & 1 & 1 & 1 \\
-1 & 1 & 1 & 1 \\
-1 & 1 & 1 & 1
\end{matrix}$
& $\begin{matrix}
0 & 4t & 1 & 0 \\
0 & 0 & 0 & 0 \\
0 & -4t & -1 & 0 \\
0 & 0 & 0 & 0
\end{matrix}$
& $\begin{matrix}
\tfrac{1}{4} & 0 & \tfrac{1}{4} & 0 \\
-\tfrac{1}{4} & 0 & -\tfrac{1}{4} & 0 \\
\tfrac{4t-1}{4} & 0 & \tfrac{4t-1}{4} & 0 \\
\tfrac{1}{4} & 0 & \tfrac{1}{4} & 0
\end{matrix}$ \\ \hline
$i=42$
& $\begin{matrix}
0 & 0 & 0 & 1 \\
0 & 0 & 0 & -1 \\
0 & 0 & 0 & 1 \\
0 & 0 & 0 & -1
\end{matrix}$
& $\begin{matrix}
0 & 0 & 0 & 0 \\
0 & 0 & 0 & 0 \\
1 & \tfrac{4t-1}{4t} & 0 & -\tfrac{1}{4t} \\
1 & \tfrac{4t-1}{4t} & 0 & -\tfrac{1}{4t}
\end{matrix}$
& $\begin{matrix}
0 & 0 & 0 & 0 \\
0 & 0 & \tfrac{1}{2} & -\tfrac{1}{2} \\
0 & 0 & 0 & 0 \\
0 & 0 & -\tfrac{1}{2} & \tfrac{1}{2}
\end{matrix}$ \\ \hline
$i=43$
& $\begin{matrix}
1 & 0 & 0 & 0 \\
1 & 0 & 0 & 0 \\
-1 & 0 & 0 & 0 \\
1 & 0 & 0 & 0
\end{matrix}$
& $\begin{matrix}
1 & 0 & 0 & 0 \\
0 & 0 & 0 & 0 \\
1 & 0 & 0 & 0 \\
0 & 0 & 0 & 0
\end{matrix}$
& $\begin{matrix}
0 & \tfrac{1}{2} & 0 & \tfrac{1}{2} \\
0 & 0 & 0 & 0 \\
0 & 0 & 0 & 0 \\
0 & 2t & 0 & 2t
\end{matrix}$ \\ \hline
$i=44$
& $\begin{matrix}
1 & 0 & 1 & 0 \\
1 & 0 & -1 & 0 \\
1 & 0 & 1 & 0 \\
-1 & 0 & 1 & 0
\end{matrix}$
& $\begin{matrix}
4t & 0 & -1 & -1 \\
0 & 0 & 0 & 0 \\
1-4t & 1 & 1 & 1 \\
0 & 0 & 0 & 0
\end{matrix}$
& $\begin{matrix}
\tfrac{1}{4} & -\tfrac{1}{4} & \tfrac{1}{4} & \tfrac{1}{4} \\
\tfrac{1}{4} & \tfrac{1}{4} & \tfrac{1}{4} & -\tfrac{1}{4} \\
-t & -t & -t & t \\
\tfrac{8t-1}{4} & -\tfrac{1}{4} & \tfrac{8t-1}{4} & \tfrac{1}{4}
\end{matrix}$ \\ \hline
$i=45$
& $\begin{matrix}
1 & 1 & -1 & 1 \\
1 & 1 & -1 & 1 \\
-1 & -1 & 1 & -1 \\
1 & 1 & -1 & 1
\end{matrix}$
& $\begin{matrix}
0 & 0 & 0 & 0 \\
0 & 0 & 0 & 0 \\
-1 & 0 & 0 & 0 \\
1 & 0 & 0 & 0
\end{matrix}$
& $\begin{matrix}
\tfrac{1}{4} & \tfrac{1}{4} & 0 & 0 \\
-\tfrac{1}{4} & -\tfrac{1}{4} & 0 & 0 \\
t & t & 0 & 0 \\
0 & 0 & 0 & 0
\end{matrix}$ \\ \hline
$i=46$
& $\begin{matrix}
1 & 1 & 1 & -1 \\
1 & 1 & 1 & -1 \\
1 & 1 & 1 & -1 \\
-1 & -1 & -1 & 1
\end{matrix}$
& $\begin{matrix}
0 & 0 & 0 & 0 \\
0 & 0 & 0 & 0 \\
1-4t & 1 & 1 & 1 \\
4t-1 & -1 & -1 & -1
\end{matrix}$
& $\begin{matrix}
0 & 0 & 0 & 0 \\
0 & 0 & 0 & 0 \\
0 & 0 & 0 & 0 \\
\tfrac{1}{4} & \tfrac{1}{4} & 0 & 0
\end{matrix}$ \\ \hline
$i=47$
& $\begin{matrix}
-1 & 0 & 0 & 0 \\
-1 & 0 & 0 & 0 \\
1 & 0 & 0 & 0 \\
1 & 0 & 0 & 0
\end{matrix}$
& $\begin{matrix}
1 & 0 & -\tfrac{1}{4t} & 0 \\
1 & 0 & -\tfrac{1}{4t} & 0 \\
0 & 0 & 0 & 0 \\
0 & 0 & 0 & 0
\end{matrix}$
& $\begin{matrix}
0 & 0 & \tfrac{1}{2} & \tfrac{1}{2} \\
0 & 0 & 0 & 0 \\
0 & 0 & 0 & 0 \\
0 & 0 & 2t & 2t
\end{matrix}$ \\ \hline
$i=48$
& $\begin{matrix}
0 & -1 & 0 & -1 \\
0 & -1 & 0 & 1 \\
0 & 1 & 0 & 1 \\
0 & -1 & 0 & 1
\end{matrix}$
& $\begin{matrix}
0 & 0 & 0 & 0 \\
0 & 1 & 0 & 0 \\
0 & 0 & 0 & 0 \\
1 & 0 & 0 & 0
\end{matrix}$
& $\begin{matrix}
-\tfrac{1}{4} & \tfrac{1}{4} & \tfrac{1}{4} & \tfrac{1}{4} \\
-\tfrac{1}{4} & -\tfrac{1}{4} & \tfrac{1}{4} & -\tfrac{1}{4} \\
-t & t & t & t \\
\tfrac{1}{4} & \tfrac{1}{4} & -\tfrac{1}{4} & \tfrac{1}{4}
\end{matrix}$ \\ \hline
\end{longtable}

\endgroup

\end{document}